\theoremstyle{plain}
\newtheorem{lem}{Lemma}[section]
\newtheorem{thm}{Theorem}[section]
\newtheorem{cor}[lem]{Corollary}
\newtheorem{definition}{Definition}
\newcounter{InlineToDo}
\title{From discrete to continuous: Monochromatic 3-term arithmetic progressions}
\author{Torin Greenwood\thanks{Department of Mathematics, North Dakota State University, Fargo, ND USA, \href{mailto:torin.greenwood@ndsu.edu}{torin.greenwood@ndsu.edu}}, Jonathan Kariv\thanks{Isazi Consulting, Johannesburg, South Africa, \href{mailto:jkariv@isaziconsulting.co.za}{jkariv@isaziconsulting.co.za}}, Noah Williams\thanks{Department of Mathematical Sciences, Appalachian State University, Boone, NC USA, \href{mailto:williamsnn@appstate.edu}{williamsnn@appstate.edu}}}
\date{December 31, 2022}
\DeclareMathOperator{\AP}{AP}
\newcommand{\intLength}{N}
\newcommand{\progLength}{\kappa}
\newcommand{\numBlocks}{n}
\newcommand{\blockApproxVar}{\ell}
\begin{document}

\maketitle

\begin{abstract}
    We prove a known 2-coloring of the integers $[\intLength] := \{1,2,3,...,\intLength\}$ minimizes the number of monochromatic arithmetic 3-progressions under certain restrictions. A monochromatic arithmetic progression is a set of equally-spaced integers that are all the same color. Previous work by Parrilo, Robertson and Saracino conjectured an optimal coloring for large $\intLength$ that involves $12$ colored blocks. Here, we prove that the conjecture is optimal among anti-symmetric colorings with $12$ or fewer colored blocks. We leverage a connection to the coloring of the continuous interval $[0,1]$ used by Parrilo, Robertson, and Saracino as well as by Butler, Costello and Graham. Our proof identifies classes of colorings with permutations, then counts the permutations using mixed integer linear programming.
\end{abstract}

\section{Introduction}
Consider coloring each of the integers in $[\intLength]$ with one of $r$ colors.  A $\progLength$-term arithmetic progression is any subset of $\progLength$ equally-spaced integers, denoted a $\progLength$-AP.  An arithmetic progression is monochromatic if every term is colored the same color.  Can we color $[\intLength]$ in a way that avoids all monochromatic $\progLength$-APs?  A classic result is van der Waerden's Theorem:

\begin{thm}[van der Waerden, \cite{vdW:1927}]\label{VDW}
For any integers $r, \progLength \geq 1$, there exists a number $\intLength$ such that every $r$-coloring of $[\intLength]$ has a monochromatic $\progLength$-AP.
\end{thm}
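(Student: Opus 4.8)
The plan is to establish Theorem~\ref{VDW} by van der Waerden's classical double induction. The outer induction is on the progression length $\progLength$; the base cases $\progLength\le 2$ are immediate (for $\progLength=1$ take $\intLength=1$, and for $\progLength=2$ the pigeonhole principle gives $\intLength=r+1$), so fix $\progLength\ge 3$ and assume as the outer inductive hypothesis that for \emph{every} number of colors there is a van der Waerden bound for $(\progLength-1)$-APs. We deduce the bound for $\progLength$-APs with $r$ colors.

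The mechanism is \emph{color focusing}. Call $(\progLength-1)$-APs $A_1,\dots,A_s$, say $A_i=\{b_i, b_i+d_i,\dots,b_i+(\progLength-2)d_i\}$, \emph{focused at} $f$ if $b_i+(\progLength-1)d_i=f$ for every $i$, so that one further step from each $A_i$ reaches the common point $f$; given an $r$-coloring $\chi$, such a family is \emph{color-focused} if each $A_i$ is monochromatic and $\chi(A_1),\dots,\chi(A_s)$ are pairwise distinct. The point of the definition: if we ever produce $r$ color-focused $(\progLength-1)$-APs whose focus $f$ lies in the interval under consideration, then those $r$ progressions carry all $r$ colors, so $\chi(f)=\chi(A_i)$ for some $i$, and $A_i\cup\{f\}$ is a monochromatic $\progLength$-AP.

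The core is an inner induction on $s$: for $s=1,\dots,r$ I would produce a length $\intLength(s)$ forcing every $r$-coloring of $[\intLength(s)]$ to contain either a monochromatic $\progLength$-AP or a color-focused family of $s$ $(\progLength-1)$-APs whose focus also lies in $[\intLength(s)]$; then $\intLength:=\intLength(r)$ proves the theorem, by the previous paragraph. The base case $s=1$ is the outer hypothesis for $(\progLength-1)$-APs with $r$ colors, applied on an interval enlarged by a constant factor so that the one extra step from the monochromatic $(\progLength-1)$-AP to its focus still fits inside. For the step $s\to s+1$, set $L=\intLength(s)$, let $q=r^{L}$ be the number of colorings of a block of $L$ consecutive integers, and let $W$ be a van der Waerden bound, available from the outer hypothesis, for $(\progLength-1)$-APs in $q$ colors. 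Cut a long interval into $W$ consecutive length-$L$ blocks and color each block by the pattern $\chi$ induces on it; since $W$ is large enough, some $\progLength-1$ of these blocks, equally spaced at a block-spacing $p\ge 1$, carry identical patterns. Inside the first of them — identified with $[\intLength(s)]$ — the inner hypothesis at level $s$ gives either a monochromatic $\progLength$-AP, and we stop, or a color-focused family $A_1,\dots,A_s$ with focus $f_0$. Let $\delta=pL$, the shift carrying each point of one chosen block to the corresponding point of the next. Reading the $A_i$ inside the big interval, the sets $A_i^{\ast}=\{b_i+j(d_i+\delta):0\le j\le\progLength-2\}$ for $i=1,\dots,s$, together with $A_{s+1}^{\ast}=\{f_0+j\delta:0\le j\le\progLength-2\}$, are each monochromatic — the $j$-th term of every one of them is a point of $A_i$ (respectively the point $f_0$) translated into the $j$-th chosen block at the same offset, where the identical pattern forces the same color — and all $s+1$ of them are focused at the single point $f_0+(\progLength-1)\delta$. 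If $\chi(f_0)$ agrees with some $\chi(A_i)$ then $A_i\cup\{f_0\}$ is a monochromatic $\progLength$-AP; otherwise $A_1^{\ast},\dots,A_{s+1}^{\ast}$ is color-focused with one more color. An interval of length $2LW$ contains all the chosen blocks and the focus $f_0+(\progLength-1)\delta$, so we may take $\intLength(s+1)=2LW$.

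Most of this is routine once the definitions are in place; the only real bookkeeping is propagating interval lengths through the two nested inductions (and one should remark that the resulting $\intLength$ has Ackermann-type growth, far larger than the true van der Waerden number, but only existence is claimed). The step I expect to demand the most care is the focusing construction: one must verify that for each $i$ and each $0\le j\le\progLength-2$ the point $b_i+jd_i$ genuinely lies in the first block, so that the identical pattern of the $j$-th block legitimately transfers its color to $b_i+j(d_i+\delta)$, and that $A_{s+1}^{\ast}$ really introduces a color not already among $\chi(A_1),\dots,\chi(A_s)$ unless a monochromatic $\progLength$-AP has already appeared. A secondary point to state carefully is that the outer hypothesis is being invoked with the enormous color count $q=r^{L}$, which is legitimate precisely because it was assumed for \emph{every} number of colors.
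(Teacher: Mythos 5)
Your argument is the classical color-focusing double induction for van der Waerden's theorem, and it is correct: the outer induction on $\progLength$ (with the hypothesis quantified over all numbers of colors, which you rightly flag as essential for the $q=r^{L}$ block-coloring step), the inner induction on the number $s$ of color-focused $(\progLength-1)$-APs, and the bookkeeping $\intLength(s+1)=2LW$ all check out, including the translation argument showing each $A_i^{\ast}$ inherits the color of $A_i$ and that all $s+1$ progressions share the focus $f_0+(\progLength-1)\delta$. Note, however, that the paper does not prove this statement at all: Theorem~\ref{VDW} is quoted as classical background with a citation to van der Waerden's 1927 paper, so there is no in-paper proof to compare against; your write-up simply supplies the standard argument for the cited result.
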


Given that monochromatic $\progLength$-APs are guaranteed to exist when enough numbers are colored, we ask a refined question: what is the minimum number of monochromatic $\progLength$-APs that could exist?  To be more precise, define $\mathcal{C}_r(\intLength)$ to be the set of $r$-colorings of $[\intLength]$.  For any $c \in \mathcal{C}_r(\intLength)$, let $m_\progLength(c)$ be the number of monochromatic $\progLength$-APs induced by $c$. Finally, let $\AP_\progLength(\intLength)$ be the total number of $\progLength$-APs in $[\intLength]$, regardless of whether they are monochromatic or not.  Then, we look at
\[
P_{r, \progLength}(\intLength) := \min_{c \in \mathcal{C}_r(\intLength)} \frac{m_\progLength(c)}{\AP_\progLength(\intLength)}.
\]
The focus of this paper is to examine the minimum for monochromatic $3$-APs within $2$-colorings, $P(\intLength) := P_{2, 3}(\intLength)$.  In 1999, Ron Graham proposed that $\lim_{n \to \infty} P(n) = \beta$ for some constant, $\beta$, and offered a \$100 prize for finding $\beta$.  Originally, it was not clear whether colorings could perform better than random in the long run: for large values of $\intLength$, is it possible to color $[\intLength]$ so the probability that a randomly selected $3$-AP is monochromatic is less than $(1/2)^3 + (1/2)^3 = 1/4$? It is notable that the analogous question for $2$-colorings of $\mathbb{Z}_p$ is answered negatively for $p$ prime. Indeed, Lu and Peng \cite{Lu:2012} show that for a given $2$-coloring of $\mathbb{Z}_p$, the fraction of $3$-APs that are monochromatic  depends only on the fraction of each color present in the coloring.

For our question concerning $2$-colorings of $[\intLength]$,  Parrilo et al. \cite{Parrilo:2008} and Butler et al. \cite{Butler:2010} verified independently but nearly simultaneously that it is possible to do better than random, and they found upper and lower bounds for the minimum monochromatic APs.
The upper bound was attained through simulating good colorings and finding one that performed well.  They landed on the following $12$-block coloring:
\begin{center}
\includegraphics[width=0.9\textwidth]{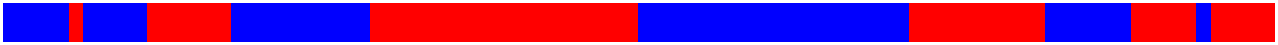}
\end{center}
Explicitly, when coloring $[\intLength]$, the blocks would be approximately of the following sizes:
\begin{equation}
\left(\frac{28\intLength}{548}, \frac{6\intLength}{548}, \frac{28\intLength}{548}, \frac{37\intLength}{548}, \frac{59\intLength}{548}, \frac{116\intLength}{548}, \frac{116\intLength}{548}, \frac{59\intLength}{548}, \frac{37\intLength}{548}, \frac{28\intLength}{548}, \frac{6\intLength}{548}, \frac{28\intLength}{548} \right) \label{eq:ParriloColoring}
\end{equation}
Due to this coloring, $P(\intLength) \leq \frac{117}{548} + o(1)$.  Note that this coloring is \emph{anti-symmetric}: the left half of the coloring is a mirror image of the right half but uses opposite colors.  In \cite{Butler:2010}, Butler et al. performed many computer simulations using genetic algorithms to find the optimal coloring, and noted that this same $12$-block coloring consistently appeared regardless of the seed coloring with which they started.  They noted that a remaining challenge would be to analyze the case of rapidly alternating colorings.

The goal of this paper is to show that as $\intLength \to \infty$, the $2$-coloring of $[\intLength]$ that has alternating color blocks with sizes given in Equation \eqref{eq:ParriloColoring} is globally optimal among anti-symmetric colorings with at most $12$ blocks.  As far as the authors are aware, this is the first result of optimality under any restrictions.  Here, we let $\tilde{\mathcal{C}}_2(\intLength)$ be the $2$-colorings of $[\intLength]$ that are anti-symmetric and have at most $12$ contiguous segments of red or blue.  Then, define
\[
\tilde{P}(\intLength) = \min_{c \in \tilde{\mathcal{C}}_2(\intLength)} \frac{m_3(c)}{\AP_3(\intLength)}.
\]
Our main result is as follows:
\begin{thm} \label{thm:MainResult}
Consider coloring each integer in $[\intLength]$ with either red or blue such that the coloring is anti-symmetric and has at most $12$ contiguous blocks.  Then, as $\intLength$ increases the minimum possible fraction of arithmetic progressions approaches $\frac{117}{548}$.  That is, $\lim_{\intLength \to \infty} \tilde{P}(\intLength) = \frac{117}{548}$.
\end{thm}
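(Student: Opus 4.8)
The plan is to pass from the discrete problem on $[\intLength]$ to a continuous optimization on $[0,1]$, reduce that to a finite piecewise-quadratic program over a low-dimensional polytope, and then certify the value $117/548$ on each piece by mixed integer linear programming. For the \emph{discretization limit}: given $c\in\tilde{\mathcal C}_2(\intLength)$ with block boundaries at $\lfloor s_i\intLength\rfloor$, where $0=s_0\le s_1\le\dots\le s_{12}=1$ records the cumulative block fractions, rescaling $a\mapsto a/\intLength$ and $d\mapsto d/\intLength$ shows that $m_3(c)/\AP_3(\intLength)$ converges to
\[
\Phi(s):=4\int_{0}^{1/2}\!\!\int_{0}^{1-2t}\mathbf{1}\!\left[\,\bar c(x)=\bar c(x+t)=\bar c(x+2t)\,\right]\,dx\,dt,
\]
where $\bar c\colon[0,1]\to\{0,1\}$ is the alternating coloring with boundaries $s$. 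The error is $O(1/\intLength)$ uniformly in $s$: for each common difference $d$ the set of starting points of a monochromatic progression is a union of $O(1)$ intervals (each of the three entries crosses at most $12$ boundaries), so its lattice-point count differs from $\intLength$ times its length by $O(1)$, and summing over $d\le\intLength/2$ gives total error $O(\intLength)=o(\AP_3(\intLength))$. Conversely every admissible $s$ is realized in the limit by the colorings with boundaries $\lfloor s_i\intLength\rfloor$. Hence $\lim_{\intLength\to\infty}\tilde P(\intLength)=\min_s\Phi(s)$, the minimum being attained because $\Phi$ is continuous and piecewise polynomial on a compact set.

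Next, the \emph{finite-dimensional reduction}. Anti-symmetry forces $s_6=\tfrac12$ and $s_{12-m}=1-s_m$, so a coloring is determined by $0\le s_1\le\dots\le s_5\le\tfrac12$, equivalently by the first-half block lengths $\ell_1,\dots,\ell_6\ge0$ with $\ell_1+\dots+\ell_6=\tfrac12$ --- a $5$-dimensional simplex $\Delta$, with the colorings having fewer than $12$ blocks sitting on its boundary. Write a progression as $(u,v)=(x,x+2t)$, so that its midpoint is $\tfrac{u+v}{2}$; the region over which we integrate in $\Phi$ is then a planar set bounded by the lines $u=s_i$, $v=s_j$, $u+v=2s_k$ and by the fixed edges $u=0$, $u=v$, $v=1$. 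Since $\bar c$ alternates, ``monochromatic'' is a parity condition on the block indices of $u,\tfrac{u+v}{2},v$, and the combinatorial structure of this arrangement --- hence the exact formula for $\Phi$ --- is determined solely by the order type of the linear forms $\{\,2s_k\,\}\cup\{\,s_i+s_j\,\}$ in $(s_1,\dots,s_5)$. Each order type is a ``permutation'' cutting out a subpolytope $\Delta_\sigma\subseteq\Delta$ on which the vertices of the integration region are linear in $s$; hence $\Phi$ restricted to $\Delta_\sigma$ is a polynomial of degree $2$, and $\Phi$ is globally piecewise quadratic with pieces indexed by these permutations.

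Finally, one must show $\Phi\ge117/548$ on each $\Delta_\sigma$ with equality only at \eqref{eq:ParriloColoring}. For each feasible permutation I would assemble $\Delta_\sigma$ and the explicit quadratic $\Phi_\sigma$ and linearize, replacing each product $s_is_j$ by an auxiliary variable constrained by its McCormick inequalities over $\Delta_\sigma$, so that $\min_{\Delta_\sigma}\Phi_\sigma$ is bounded below by a linear program; binary variables then encode which feasible order type is active and bind the auxiliary variables to the correct cell, packaging everything as one mixed integer linear program whose optimum is a rigorous lower bound for $\min_\Delta\Phi$. Checking that this optimum equals $117/548$, attained only at \eqref{eq:ParriloColoring}, together with the upper bound $\tilde P(\intLength)\le\frac{117}{548}+o(1)$ and the limit identity above, yields the theorem. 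The main obstacle lies exactly here: the number of conceivable order types of $\{2s_k\}\cup\{s_i+s_j\}$ is astronomically large, so it is essential to prune aggressively using $0\le s_1\le\dots\le s_5\le\tfrac12$, anti-symmetry, and the comparisons among $s_i+s_j$ and $2s_k$ forced by others, so that both the surviving family and the resulting MILP are small enough to solve exactly; one must also confirm that the linearization is tight cell-by-cell, so the LP bound actually meets $117/548$ rather than falling short, and track the degenerate colorings carefully so the program genuinely represents ``at most $12$'' blocks. The rescaling limit, the error estimate, and the reduction to a piecewise-quadratic program are by comparison routine.
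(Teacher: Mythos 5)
Your discretization limit and the reduction to a piecewise-quadratic function of the anti-symmetric endpoints, with pieces indexed by the feasible order types of $\{2s_k\}\cup\{s_i+s_j\}$, match the paper's route (Lemmas \ref{lem:beadcompute}, \ref{cor:UniformError}, \ref{cor:DiscToContinuousFinal}, and \ref{lem:ShapeLemma}); even your worry about the astronomically many order types is handled in the paper exactly as you suggest, by incrementally building inequality systems and discarding infeasible ones with an LP solver, which leaves $371{,}219$ cells for $12$ blocks (Lemma \ref{lem:NumOrderings}). The genuine gap is in your final certification step. You propose to lower-bound $\min_{\Delta_\sigma}\Phi_\sigma$ by replacing each bilinear term $s_is_j$ with a McCormick-relaxed auxiliary variable and solving one big MILP. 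McCormick envelopes are not tight for nonconvex quadratics over a polytope, and the paper notes explicitly that many pieces of $f$ fail to be convex, so there is no reason the relaxed optimum equals $117/548$ rather than falling strictly below it; you flag this ("one must also confirm that the linearization is tight cell-by-cell") but offer no mechanism to ensure or repair it, and the uniqueness of the minimizer would be even harder to recover from a relaxation. As written, the argument proves only "$\tilde P \geq$ (MILP value)" for a value you cannot show equals $117/548$.

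The missing idea that the paper uses instead is a smoothness argument: $f(\mathbf{x}_\numBlocks)$ is $C^1$ on the open polytope $0=x_0<x_1<\cdots<x_\numBlocks=1$ (Lemma \ref{lem:fIsC1}, proved geometrically from the BCG diagrams). Consequently the global minimum over colorings with at most $12$ blocks is attained either at a critical point of $f$ interior to some cell for some even $\numBlocks\le 12$ (the boundary of the polytope being exactly the colorings with fewer blocks), and on each cell the active piece is a quadratic, so its critical points are cut out by \emph{linear} equations. One can therefore feed the cell's defining inequalities together with the vanishing-gradient equalities into the same exact (rational-arithmetic) LP machinery, detect whether a critical point exists in each cell, and evaluate the quadratic there — no relaxation and no tightness question arises, and rationality of the optimum (Corollary \ref{cor:RatEndpoints}) falls out for free. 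If you replace your McCormick MILP with this critical-point search, justified by a $C^1$ lemma of your own, your outline becomes essentially the paper's proof.
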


Below, we provide a proof sketch that outlines the sections in the paper.

\begin{proof}[Sketch of proof]
First, we will convert from discrete colorings of $[\intLength]$ to continuous colorings of $[0, 1]$ with at most $12$ contiguous segments, referred to as \emph{block colorings.}  After restricting the number of color changes that can occur within a coloring, it turns out that optimizing the discrete colorings is the same as optimizing the continuous colorings, as described rigorously in Lemma \ref{cor:DiscToContinuousFinal}.

When switching to the continuous realm, we let a continuous coloring be a function $c: [0, 1] \to \{0, 1\}$, where $0$ and $1$ (in the range) represent red and blue, respectively.  Then, we let $f_{[0, 1]}(c)$ be the fraction of arithmetic progressions in the coloring $c$ that are monochromatic.  We  can represent this fraction geometrically by a \emph{BCG diagram}, described by Butler, Costello, and Graham in \cite{Butler:2010} and illustrated in Figure \ref{fig:GrahamDiagram} below.  When $c$ consists of 12 contiguous segments, we label the endpoints of the coloring as $(x_0 = 0, x_1, \ldots, x_{12} = 1)$.  As we allow the coloring $c$ to vary, $f_{[0,1]}(c)$ is a piecewise quadratic function in the $x_i$.  Moreover, each piece of $f_{[0, 1]}(c)$ is determined completely by the relative ordering of the pairs of sums $\{x_i + x_j\}$, as described in Lemma \ref{lem:ShapeLemma}.

Next, we aim to identify every piece of the quadratic function over all colorings $c$ of $[0, 1]$ with $12$ intervals.  Using the GNU Linear Programming Kit \cite{GLPK}, we count $371,219$ possible arrangements of $\{x_i + x_j\}$ that could give distinct quadratics in $f_{[0, 1]}$, as proved in Lemma \ref{lem:NumOrderings} with the help of our code available online at \url{https://cocalc.com/TorinGreenwood/MonochromeSequences/MonochromaticProgressions}.

Finally, once we have identified the $371,219$ possible pieces in the quadratic function, we search for the global minimum of $f_{[0, 1]}$ among all these pieces.  Fortunately, from Lemma \ref{lem:fIsC1}, it turns out that $f_{[0, 1]}$ is a continuous function with continuous partial derivatives.  Thus, we can minimize $f_{[0, 1]}$ by searching for all critical points within each piece of the quadratic.  Because $f_{[0, 1]}$ is piecewise quadratic, its critical points are determined by systems of linear inequalities (defining the domain of a piece of $f_{[0, 1]}$) and equalities (setting the partial derivatives of $f_{[0, 1]}$ to zero), allowing us again to use linear programming to identify the critical points.  We describe our search for these critical points in Lemma \ref{lem:fMin}, completing the proof.
\end{proof}

A byproduct of our proof structure is that among colorings with a fixed number of contiguous blocks, there exist optimal colorings with rational endpoints, as described in Corollary \ref{cor:RatEndpoints}.  In Section \ref{sec:circColor}, we show that with respect to $2$-colorings of the continuous unit circle $S_1$, the fraction of monochromatic APs depends only on the measure of points colored red.  This is analogous to the results in \cite{Datskovsky:2003, Lu:2012} that concern colorings of $\mathbb{Z}_p$ for $p$ prime.

\section{Background}

When searching for bounds on the number of monochromatic arithmetic progressions in $[\intLength]$, Frankl, Graham, and R\"{o}dl developed the following theorem:

\begin{thm}[Frankl, Graham, R\"{o}dl, \cite{Frankl:1988}]
For fixed $r$ and $\progLength$, there exists $\ell > 0$ so that the number of monochromatic $\progLength$-APs in any $r$-coloring of $\{1,2,\ldots,\intLength\}$ is at least $\ell \intLength^2 + o(\intLength^2)$.
\end{thm}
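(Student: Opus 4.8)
The statement to be proved is the Frankl--Graham--R\"odl lower bound: for fixed $r$ and $\progLength$, there is a constant $\ell > 0$ such that every $r$-coloring of $[\intLength]$ contains at least $\ell \intLength^2 + o(\intLength^2)$ monochromatic $\progLength$-APs. My plan is to prove this via a supersaturation argument bootstrapped from van der Waerden's theorem (Theorem~\ref{VDW}), which is the standard route. First I would fix $r$ and $\progLength$ and invoke van der Waerden to obtain a threshold $W = W(r,\progLength)$ with the property that every $r$-coloring of any window of $W$ consecutive integers contains at least one monochromatic $\progLength$-AP. The idea is then to count monochromatic $\progLength$-APs by averaging over many such windows inside $[\intLength]$.

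The key steps, in order, are as follows. (1) For each starting point $a \in \{1, 2, \ldots, \intLength - W + 1\}$, consider the window $I_a = \{a, a+1, \ldots, a + W - 1\}$; by the choice of $W$, the coloring restricted to $I_a$ contains at least one monochromatic $\progLength$-AP, all of whose terms lie in $I_a$ and hence whose common difference is at most $W/\progLength$. (2) Sum this over all $\intLength - W + 1$ choices of $a$, obtaining a lower bound of $\intLength - W + 1$ on the total count of (window, monochromatic-$\progLength$-AP) incidences. (3) Bound the multiplicity: a single monochromatic $\progLength$-AP with terms spanning a range of length at most $W$ is counted in at most $W$ different windows $I_a$ (since once the AP is fixed, $a$ can vary over an interval of length at most $W$). (4) Dividing, the number of distinct monochromatic $\progLength$-APs is at least $(\intLength - W + 1)/W$, which is linear in $\intLength$, not quadratic. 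To upgrade from linear to quadratic I would instead partition $[\intLength]$ into roughly $\intLength/(2W)$ disjoint blocks of length $2W$ and, within each block, run the windowing argument at a fixed scale; rescaling the common difference gives, for each of the $\Theta(\intLength)$ "dilation levels" $d \le \intLength/(\progLength W)$, a separate family of $\Theta(\intLength/d)$ — no, more carefully: apply van der Waerden to the arithmetic progression $\{a, a+d, a+2d, \ldots\}$ of length $W$ for each valid base point $a$ and each common difference $d$ up to $\intLength/(W\progLength)$, yielding a monochromatic $\progLength$-AP of difference a multiple of $d$; there are $\Theta(\intLength)$ choices of $d$ and $\Theta(\intLength)$ choices of $a$ for each, and the multiplicity with which any fixed $\progLength$-AP is recounted is $O(W^2)$, giving $\Theta(\intLength^2 / W^2)$ distinct monochromatic $\progLength$-APs. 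Taking $\ell = c / W(r,\progLength)^2$ for a suitable absolute constant $c$ completes the argument, with the $o(\intLength^2)$ term absorbing boundary effects.

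The main obstacle I anticipate is controlling the overcounting in step (3)/(4) cleanly enough to get a genuinely quadratic bound: naive windowing at a single scale only produces linearly many APs, so the scaling/dilation idea is essential, and one must verify that APs produced at different dilation levels $d$ are not all collapsing onto the same small set of APs. Making the double count (over base points $a$ and over dilations $d$) rigorous while keeping the multiplicity bounded by a function of $W$ alone — independent of $\intLength$ — is the crux. An alternative, and perhaps cleaner, route would be a direct supersaturation/removal-type argument: if a coloring had only $o(\intLength^2)$ monochromatic $\progLength$-APs, one could delete one integer from each such AP, removing $o(\intLength^2) = o(\intLength)$ density worth of points — here one would want a colored analogue of the fact that a subset of $[\intLength]$ of size $\intLength - o(\intLength)$ still admits a van der Waerden-type structure — to reach a contradiction with Theorem~\ref{VDW}; however this requires a quantitative deletion argument, so I would favor the explicit double-counting proof above as the primary plan. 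Either way, since this theorem is quoted from \cite{Frankl:1988} purely as background, a short proof sketch along these lines suffices for the exposition.
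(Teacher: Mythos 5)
The paper does not prove this statement at all: it is quoted from \cite{Frankl:1988} purely as background, so there is no in-paper argument to compare against, and your sketch has to stand on its own. Your primary plan — averaging van der Waerden over dilated windows $\{a, a+d, \ldots, a+(W-1)d\}$ with $W = W(r,\progLength)$ — is exactly the standard Varnavides-type supersaturation argument, and it does work. You correctly diagnose that single-scale windows of consecutive integers only give linearly many progressions, and the crux you flag (that progressions found at different dilation levels $d$ might coincide) has a clean resolution you should make explicit: if a fixed monochromatic $\progLength$-AP with common difference $D$ lies inside a window of difference $d$, then $d \mid D$ and $D/d \leq (W-1)/(\progLength-1)$, so at most $W$ values of $d$ are possible, and for each of them at most $W$ base points $a$, giving multiplicity at most $W^2$ independently of $\intLength$. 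Since the number of valid pairs $(a,d)$ is of order $\intLength^2/W$, the resulting bound is of order $\intLength^2/W^3$ rather than your stated $\intLength^2/W^2$, but the power of $W$ is immaterial: one gets $\ell = \ell(r,\progLength) > 0$ as required, and in fact the bound holds outright, so the $o(\intLength^2)$ error term in the statement is not even needed.

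Two cautions. First, your "alternative, cleaner" removal-type route contains a genuine error as written: deleting one integer from each of $o(\intLength^2)$ monochromatic APs removes up to $o(\intLength^2)$ integers, not $o(\intLength)$, which can be all of $[\intLength]$, so no contradiction with Theorem~\ref{VDW} follows; making that route work requires substantially more (a counting or supersaturation input you would have to prove anyway). Second, when applying Theorem~\ref{VDW} inside a window with difference $d$, say explicitly that the coloring is pulled back along the affine bijection $i \mapsto a+(i-1)d$, which sends monochromatic $\progLength$-APs of $[W]$ to monochromatic $\progLength$-APs of $[\intLength]$ with difference a multiple of $d$; you gesture at this with "rescaling," and it is the only place the structure of APs is used. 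With those points tightened, your double-counting sketch is a correct and standard proof of the quoted bound (the actual results of \cite{Frankl:1988} are sharper, determining quantitative constants, but that is not needed here).
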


This proved that a positive fraction of APs must be monochromatic in the long run, but gave no indication of how small $\ell$ could be.

Datskovsky made progress on a related problem in \cite{Datskovsky:2003}, analyzing the minimal number of monochromatic Schur triples in $[\intLength]$.  A Schur triple $(a, b, c)$ from $[\intLength]$ is any triple of integers where $a + b = c$.  Datskovsky investigated the minimum possible number of monochromatic Schur triples when coloring each integer red or blue, and proved that asymptotically, the minimum is $\intLength^2/11$.  The proof relied on using a discrete Fourier transform, which yielded a combinatorial identity that broke down counts of Schur triples into a few easier to analyze sets.  Although our proof does not use the discrete Fourier transform, it also will transform a discrete problem into a continuous space.

In \cite{Parrilo:2008}, Parrilo et al.\ applied some of the tools from Datskovsky's work to arithmetic progressions.  Again, the authors found a combinatorial identity breaking down sets of arithmetic progressions into simpler sets, but it was no longer possible to enumerate these sets exactly.  Instead, the authors ended up with bounds on the minimum number of monochromatic progressions possible in $[\intLength]$.  They also identified the coloring shown in Equation \eqref{eq:ParriloColoring} in the introduction above, and verified it was locally optimal among colorings with $12$ intervals that are \emph{antisymmetric}.  Our paper aims to prove that this coloring is optimal globally among the same set of colorings.

\emph{Constellations} are a generalization of APs studied in \cite{Butler:2010}, where instead of all points being equally spaced like in an AP, the consecutive differences of terms must satisfy some fixed proportions.  Butler et al. analyzed constellations by representing sets of monochromatic constellations using integrals of indicator functions.  This led them to represent monochromatic regions in two-dimensional diagrams which we refer to as BCG diagrams, as illustrated in Figure \ref{fig:GrahamDiagram}.  Visualizing progressions via these diagrams is crucial to our proof, and provides the connection we need between discrete and continuous realms.

One important aspect of our proof is enumerating the number of ways pairwise sums $\{x_i + x_j\}$ can be ordered for a list of positive real numbers $x_0 \leq x_1 \leq \ldots \leq x_\numBlocks$ with $\numBlocks$ even and $x_i + x_{\numBlocks - i} = 1$. This problem could be framed as counting the number of chambers in a hyperplane arrangement, and there already exists a rich set of tools for counting chambers, as seen for example in \cite{Stanley:2006}.  However, in this paper, we use mixed integer linear programming, which is well-suited to determining whether a system of linear inequalities has a solution.  This coding approach was also employed by Miller and Peterson in \cite{Miller:2019} when they counted \emph{more sums than differences sets}, and also by Laaksonen in \cite{Laaksonen:2019} when he counted closely-related arrangements of sums of pairs.  More details on this approach are given in Section \ref{sec:CountConfigurations} below.

The current best known bounds on the minimum number of monochromatic $\progLength$-APs in the general (non-antisymmetric) case for $\progLength > 3$ are found using an ``unrolling'' strategy, described in \cite{Lu:2012} and \cite{Butler:2014}.  Here, an optimal coloring of some interval $\{1, \ldots, \ell\}$ for $\ell \ll \intLength$ is found explicitly, and then repeated to fill the interval $[\intLength]$.  Although this strategy works well for $\progLength > 3$, when $\progLength = 3$, the colorings do no better than random in the long run.

\section{Relationship between discrete and continuous case}\label{sec:discCont}
%
%
%
In this section, we define a precise connection between discrete $2$-colorings of $[\intLength]$, and a natural continuous analogue of $2$-coloring $[0, 1]$. {First, we pause to define a $3$-AP in $[\intLength]$ formally: a $3$-AP is any set of $3$ terms $(a, a + d, a + 2d)$ each in $[\intLength]$ where $d$ is any integer including negative values or zero.  It is convenient for us to include the case where $d \leq 0$ in our arguments, although this choice ultimately does not change which colorings minimize monochromatic APs nor the minimum they attain.}

{For the interval $[0, 1]$, we identify any $3$-AP $(a, a + d, a + 2d)$ by its first and last term $(a, a + 2d)$ in $[0,1]\times[0,1]$, now allowing $d$ to be any real number.}  We obtain a measure on the set of $3$-APs in $[0, 1]$ by choosing the starting and ending point of the progressions uniformly.  A coloring of the interval is defined to be a function $c: [0, 1] \to \{0, 1\}$.

{In this section, we begin by discussing measurable colorings of $[0, 1]$, which can be approximated in a standard way by \emph{bead colorings}, defined below.  Then, we show that minimizing monochromatic APs over all measurable colorings of $[0, 1]$ is the same as minimizing all APs over just bead colorings, as formalized in Lemmas \ref{lem:continuity}, \ref{lem:blockapprox}, and \ref{lem:beadmeasurable} below.

Next, we justify that every discrete coloring of $[\intLength]$ has a corresponding continuous coloring of $[0, 1]$, and that the fraction of monochromatic APs in the continuous coloring is a function of both the monochromatic APs and monochromatic \emph{off-by-1} APs in the discrete coloring, as explained above and in Lemma \ref{lem:beadcompute}.  Using this connection, we find that when the number of blocks of contiguous runs of colors in a coloring is bounded by $\numBlocks$, the fraction of APs in a continuous coloring versus its discrete analogue is small as $\intLength$ grows large, formalized in Lemma \ref{cor:UniformError}.  Finally, this allows us to prove our main result of the section: that minimizing over discrete colorings with a fixed number of blocks is the same as minimizing over continuous colorings with the same number of blocks, stated rigorously in Lemma \ref{cor:DiscToContinuousFinal}.

Now, we begin stating our results formally, starting with the definition of a Lebesgue-measurable coloring.}

 \begin{definition}
     A coloring of $[0,1]$ is Lebesgue-measurable if $c^{-1}(0)$ is Lebesgue-measurable (or equivalently $c^{-1}(1)$ is Lebesgue-measurable).
 \end{definition}
 
 \begin{definition}
      A bead coloring of $[0,1]$ is a coloring where for some ${\ell}$, each of the intervals $(\frac{i}{{\ell}},\frac{i+1}{{\ell}})$ is monochrome for $i = 0, 1, \ldots, {\ell} - 1$.  Each interval $(\frac{i}{{\ell}}, \frac{i + 1}{{\ell}})$ is called a bead, and we sometimes refer to such a coloring as an ${\ell}$-bead coloring. 
 \end{definition}

 We introduce bead colorings because they are the continuous analogue of coloring the integers $[\intLength]$ obtained by fattening each integer into an interval.  Our goal is to show that when optimizing colorings over the interval $[0, 1]$, we may restrict our attention to bead colorings.  
 We call the set of bead colorings $\mathcal{B}$ and the set of Lebesgue-measurable colorings $\mathcal{M}$. Observe that $\mathcal{B} \subset \mathcal{M}$. Finally we define a difference between two colorings as follows.
 
 \begin{definition}
    For two colorings $c_a \in \mathcal{M}$ and $c_b \in \mathcal{M}$  we define $d(c_a,c_b) := \mu(\{x \mid c_a(x) \neq c_b(x)\})$, where $\mu$ is the usual Lebesgue measure on $\mathbb{R}$.
\end{definition}
 
Recall that we identify an arithmetic progression in $[0, 1]$ by the pair of starting and ending points in $[0, 1]$. 
%
For a coloring $c$ on $[\intLength]$, we define $f_{[\intLength]}(c)$ to be the fraction of arithmetic progressions that are monochromatic.
Analogously, when $c$ is a coloring of $[0, 1]$, we have the following definition:

\begin{definition}  For a coloring $c:[0, 1] \to \{0, 1\}$, let $f_{[0, 1]}(c)$ be the Lebesgue measure of the set of monochromatic arithmetic $3$-term progressions (viewed as a subset of $[0, 1]^2$) induced by the coloring $c$.
\end{definition}
We justify our restriction to bead colorings with the following standard measure-theoretic lemmas (proved for completeness momentarily):

\begin{lem}
\label{lem:continuity}
For any two measurable colorings $c_1$ and $c_2$ of $[0, 1]$, if $d(c_1,c_2) < \epsilon$,  then $|f_{[0, 1]}(c_1) - f_{[0, 1]}(c_2)| < 4 \epsilon$.
\end{lem}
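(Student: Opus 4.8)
The plan is to bound the symmetric difference of the sets of monochromatic 3-APs induced by $c_1$ and $c_2$, viewing each such set as a subset of $[0,1]^2$ via the starting/ending point parametrization. Write $S_i \subseteq [0,1]^2$ for the set of monochromatic APs of $c_i$, so $f_{[0,1]}(c_i) = \mu_2(S_i)$ where $\mu_2$ is Lebesgue measure on $[0,1]^2$. Then $|f_{[0,1]}(c_1) - f_{[0,1]}(c_2)| \leq \mu_2(S_1 \triangle S_2)$, so it suffices to show $\mu_2(S_1 \triangle S_2) \leq 4\epsilon$.

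The key observation is that a pair $(a, b) \in [0,1]^2$ lies in $S_1 \triangle S_2$ only if the AP $(a, \tfrac{a+b}{2}, b)$ is monochromatic for one coloring but not the other, which forces at least one of the three points $a$, $\tfrac{a+b}{2}$, $b$ to lie in the ``bad set'' $B := \{x : c_1(x) \neq c_2(x)\}$, which has $\mu(B) = d(c_1,c_2) < \epsilon$. Hence $S_1 \triangle S_2 \subseteq A_1 \cup A_2 \cup A_3$ where $A_1 = \{(a,b) : a \in B\}$, $A_2 = \{(a,b) : b \in B\}$, and $A_3 = \{(a,b) : \tfrac{a+b}{2} \in B\}$. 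I would then bound each piece: $\mu_2(A_1) = \mu(B) \cdot 1 < \epsilon$ and likewise $\mu_2(A_2) < \epsilon$ by Fubini. For $A_3$, the map $(a,b) \mapsto (a, \tfrac{a+b}{2})$ has Jacobian determinant $\tfrac12$, so fixing $a$ and integrating over $b$ gives at most $2\mu(B) < 2\epsilon$ for each slice; integrating over $a \in [0,1]$ yields $\mu_2(A_3) < 2\epsilon$. Summing, $\mu_2(S_1 \triangle S_2) < \epsilon + \epsilon + 2\epsilon = 4\epsilon$, as desired.

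The main obstacle — really the only subtlety — is handling the measurability and the midpoint slice carefully: one must confirm that $A_3$ is measurable (it is the preimage of the measurable set $B$ under the continuous, hence measurable, map $(a,b)\mapsto \tfrac{a+b}{2}$) and that the constant $2$ in the midpoint bound is sharp enough, since a midpoint near $0$ or $1$ constrains $(a,b)$ less than the naive ``length $2\mu(B)$'' estimate might suggest; restricting $(a,b)$ to $[0,1]^2$ only shrinks the slice, so the bound $2\mu(B)$ still holds. Everything else is a routine application of Fubini's theorem and the change-of-variables formula, so I would keep the write-up short.
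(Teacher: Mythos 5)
Your proof is correct and follows essentially the same route as the paper: both decompose the progressions whose monochromatic status can differ into those whose first, middle, or last term lies in the disagreement set, bound the first two classes by $\epsilon$ each, and the middle class by $2\epsilon$. The paper obtains the factor of $2$ by noting that the midpoint of two uniform draws has triangular density bounded by $2$, which is exactly your Fubini/change-of-variables slice computation in probabilistic language.
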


\begin{lem}
\label{lem:blockapprox}
 For any measurable coloring $c_m$ of $[0, 1]$ and any $\epsilon > 0$ there exists a bead coloring $c_b$ such that $c_m$ and $c_b$ disagree on a set of measure at most $\epsilon$.
\end{lem}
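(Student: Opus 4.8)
The plan is to treat this as a routine outer-regularity approximation, working with the single measurable set $A := c_m^{-1}(1) \subseteq [0,1]$ that determines $c_m$; the goal is a finite union of $\ell$-adic beads whose symmetric difference with $A$ has measure at most $\epsilon$. First I would apply outer regularity of Lebesgue measure to choose an open set $U \supseteq A$ with $\mu(U \setminus A) < \epsilon/4$, and (after intersecting with a bounded interval) assume $U$ is bounded, so that $U = \bigsqcup_{j \ge 1} I_j$ is a countable disjoint union of open intervals. Since $\sum_j \mu(I_j) = \mu(U) < \infty$, I can pick $M$ with $\sum_{j > M} \mu(I_j) < \epsilon/4$ and set $V := \bigcup_{j=1}^M I_j$, a finite union of open intervals; then $\mu(A \triangle V) \le \mu(U \setminus A) + \mu(U \setminus V) < \epsilon/2$.

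Next I would discretize $V$ to the $\ell$-adic grid. For each $\ell$, let $W_\ell$ be the union of all beads $\left(\frac{i}{\ell}, \frac{i+1}{\ell}\right)$ that are entirely contained in $V$. Then $W_\ell \subseteq V$, and $V \setminus W_\ell$ is covered by the beads that straddle one of the (at most $2M$) endpoints of the intervals comprising $V$, so $\mu(V \setminus W_\ell) \le 2M/\ell$. Choosing $\ell > 4M/\epsilon$ gives $\mu(V \triangle W_\ell) = \mu(V \setminus W_\ell) < \epsilon/2$. Now define the coloring $c_b$ by $c_b(x) = 1$ for $x \in W_\ell$ and $c_b(x) = 0$ otherwise; each bead $\left(\frac{i}{\ell}, \frac{i+1}{\ell}\right)$ is monochrome under $c_b$, so $c_b$ is an $\ell$-bead coloring. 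The set $\{x : c_b(x) \neq c_m(x)\}$ differs from $W_\ell \triangle A$ only by a subset of the finitely many points $\frac{i}{\ell}$, hence $d(c_m, c_b) = \mu(W_\ell \triangle A) \le \mu(W_\ell \triangle V) + \mu(V \triangle A) < \epsilon$, which is the claim.

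I do not expect a genuine obstacle here: the argument uses only outer regularity of Lebesgue measure, $\sigma$-additivity (to truncate the countable union of components of $U$ to a finite one), and the elementary observation that snapping the $2M$ endpoints of $V$ to the $\ell$-adic grid costs $O(M/\ell)$ in measure. The one point to stay honest about is the boundary: because $V$ is a \emph{finite} union of open intervals, its boundary is finite and hence null, so the bead endpoints never contribute; had I instead tried to discretize the full open set $U$ directly, $\partial U$ could have positive measure and the bead-approximation step would need the slightly more careful remark that $\mathbf{1}_{W_\ell} \to \mathbf{1}_U$ almost everywhere (which still holds, since every point of the open set $U$ eventually lies in a bead contained in $U$) together with dominated convergence. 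An alternative proof, avoiding regularity altogether, would take $c_b$ to be the majority color of $c_m$ on each of the $\ell$ beads and bound the total disagreement by $\lVert g_\ell - g \rVert_{L^1}$, where $g = \mathbf{1}_A$ and $g_\ell$ is the block-average of $g$ over the $\ell$ beads; this tends to $0$ by the Lebesgue differentiation theorem (equivalently, martingale convergence along dyadic $\ell$) and dominated convergence, but the regularity argument above is shorter.
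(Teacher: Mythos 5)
Your proposal is correct and takes essentially the same route as the paper: approximate the relevant color class by a finite disjoint union of open intervals (the paper cites the standard theorem that you re-derive from outer regularity plus truncation of the countable union), and then align that finite union with a bead structure (the paper perturbs the $2\ell$ endpoints to rationals, which share a common denominator, while you snap to an explicit $\ell$-adic grid with the $2M/\ell$ bound). Both arguments give $d(c_m,c_b)<\epsilon$, so there is nothing to fix.
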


As $\mathcal{B} \subset \mathcal{M}$, Lemma \ref{lem:blockapprox} immediately implies the following: 

\begin{lem}
\label{lem:beadmeasurable} Optimizing monochromatic 3-APs over bead colorings is the same as optimizing over all measurable colorings in the following sense:
\[\inf\limits_{c_b \in \mathcal{B}} f_{[0, 1]}(c_b) = \inf\limits_{c_m \in \mathcal{M}} f_{[0, 1]}(c_m).\]
\end{lem}

\ \\
We begin with the proof of Lemma \ref{lem:continuity}.

\begin{proof}[Proof of Lemma \ref{lem:continuity}.]
{Let $A \subset [0, 1]$ be a set of measure $\epsilon$, and consider flipping the colors of all elements in $A$.}  There are three classes of monochromatic $3$-APs that could be created or destroyed: the APs where the first, middle or last element is flipped (where some APs may belong to more than one class). We consider the measure of each of these three classes. As the first and last elements of a progression are chosen uniformly, the corresponding classes have measure $\epsilon$. The middle element is the average of two uniform random variables, and so has a triangular distribution on $[0,1]$ with maximum density $2$.  Therefore the set of monochrome progressions whose middle term is in $A$ would have measure at most $2 \epsilon$.  Summing the measures of these three classes yields an upper bound for their union of $4 \epsilon$.
\end{proof}

We now justify Lemma \ref{lem:blockapprox}, whose proof is a standard measure-theoretic argument.

\begin{proof}[Proof of Lemma \ref{lem:blockapprox}.]By hypothesis, the set $X_{\rm bl} := c_{{m}}^{-1}(0)$ of blue-colored elements of $[0,1]$ is measurable with finite measure.  So, a standard result from measure theory (e.g. \cite[Theorem 12]{RoydenFitz}) establishes the existence of a finite disjoint collection of open intervals $I_1, \ldots, I_\blockApproxVar \subset [0,1]$ satisfying 
\[
\mu\left(\left(\bigcup_{i=1}^\blockApproxVar I_i\right) \setminus X_{\rm bl}\right) + \mu\left(X_{\rm bl}\setminus \bigcup_{i=1}^\blockApproxVar I_i \right) < \frac{\epsilon}{2}.
\]
Since the rationals are dense in $[0,1]$, we can perturb the $2\blockApproxVar$ endpoints of the intervals $\{I_i\}$, each by some amount less than $\frac{\epsilon}{{4}\blockApproxVar}$, to find a disjoint collection $I_1', I_2', \ldots, I_\blockApproxVar'$ of open intervals with rational endpoints.  Let $\mathcal{U}_{\rm bl}$ be the union of these intervals.  Then,  $\mathcal{U}_{\rm bl}$ and $X_{\rm bl}$ have a symmetric difference of measure at most ${\epsilon}$. It follows that the coloring $c_b$ defined by coloring each interval of $\mathcal{U}_{\rm bl}$ blue is a bead coloring for which $d(c_b, c_m) < \epsilon$.
\end{proof}

%

Call a progression an \emph{off-by-1 AP} if it is of the form $(a, a + d, a + 2d \pm 1)$. We will show that we can easily compute $f_{[0, 1]}(c_b)$ for a bead coloring $c_b$ {with $\intLength$ beads by considering the colored beads as an integer coloring of $[\intLength]$}, computing the number of 3-term APs in this sequence, and adding half of the off-by-1 APs. Recall that for a discrete coloring $c$, $m_3(c)$ is the number of monochromatic $3$-APs induced by $c$.  Let $m_3'(c)$ be the number of monochromatic off-by-1 APs.  Then, we have the following comparison between colorings of $[0, 1]$ with exactly $\intLength$ beads (of not necessarily alternating colors) and corresponding colorings of $[\intLength]$.


\begin{lem}
\label{lem:beadcompute}
Let $c_b$ be an $\intLength$-bead coloring of $[0,1]$, and let $c_b^*$ be the discrete coloring of $[\intLength]$ corresponding to $c_b$, where the number $i$ is colored blue if and only if the $i$th bead in $c_b$ is colored blue.  Then,
\[
f_{[0, 1]}(c_b) = \frac{m_3(c_b^*) + m_3'(c_b^*)/2}{\intLength^2}.
\]
\end{lem}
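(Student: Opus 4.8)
The plan is to compute $f_{[0,1]}(c_b)$ directly from its definition as the Lebesgue measure of the set of monochromatic $3$-APs in $[0,1]^2$, and to match this against a sum of contributions indexed by pairs of beads. A $3$-AP with first term $a$ and last term $a+2d$ (so middle term $\frac{a+(a+2d)}{2}$) is monochromatic precisely when all three of $a$, $a+2d$, and their average lie in beads of the same color. I would partition the square $[0,1]^2$ into the $\intLength^2$ subsquares $S_{p,q} := \left(\frac{p-1}{\intLength},\frac{p}{\intLength}\right)\times\left(\frac{q-1}{\intLength},\frac{q}{\intLength}\right)$ for $p,q\in[\intLength]$ (the boundary grid lines have measure zero and can be ignored). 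On $S_{p,q}$ the first point lies in bead $p$ and the last point lies in bead $q$, so the first and last endpoints are monochromatic iff beads $p$ and $q$ have the same color, i.e.\ iff $c_b^*(p) = c_b^*(q)$. The remaining condition is that the midpoint $\frac{a+(a+2d)}{2}$ lies in a bead of that same color.

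The key computation is then: for a fixed pair $(p,q)$ with $c_b^*(p)=c_b^*(q)$, what is the measure of the set of $(a, a+2d)\in S_{p,q}$ whose midpoint lies in a bead of color $c_b^*(p)$? Writing the first coordinate as $\frac{p-1+s}{\intLength}$ and the last coordinate as $\frac{q-1+t}{\intLength}$ with $s,t\in(0,1)$, the midpoint is $\frac{1}{\intLength}\cdot\frac{(p-1+s)+(q-1+t)}{2}$, which lies in bead $m$ exactly when $\frac{(p-1+s)+(q-1+t)}{2}\in(m-1,m)$. As $(s,t)$ ranges over the unit square, the quantity $\frac{(p-1+s)+(q-1+t)}{2}$ has a triangular distribution, and the midpoint bead index is either $\left\lceil\frac{p+q-1}{2}\right\rceil$-ish with a single value (when $p+q$ is even, the midpoint ranges over a single bead's worth of interval, up to endpoints) or straddles two consecutive beads (when $p+q$ is odd). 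I would carry out this two-case split carefully: in the even case the midpoint stays within one bead $m=\frac{p+q}{2}$, contributing measure $\frac{1}{\intLength^2}$ to an honest monochromatic $3$-AP exactly when $c_b^*(m)=c_b^*(p)$, matching a term of $m_3(c_b^*)$; in the odd case the triangular density splits its mass equally between the two beads $\frac{p+q-1}{2}$ and $\frac{p+q+1}{2}$, contributing $\frac{1}{2}\cdot\frac{1}{\intLength^2}$ for each of those two beads that shares the color, which is exactly the weighting in $m_3'(c_b^*)/2$ (recalling an off-by-$1$ AP has the form $(a,a+d,a+2d\pm 1)$, i.e.\ in the discrete picture the first and last differ from a genuine AP's midpoint-alignment by the $\pm1$).

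Summing these per-subsquare contributions over all $(p,q)$ with $c_b^*(p)=c_b^*(q)$ and dividing through, the even-$p+q$ terms assemble into $\frac{m_3(c_b^*)}{\intLength^2}$ and the odd-$p+q$ terms assemble into $\frac{m_3'(c_b^*)/2}{\intLength^2}$, giving the claimed identity. The main obstacle I anticipate is bookkeeping rather than conceptual: correctly identifying, in the odd case, that the two beads picked out as candidate midpoint locations correspond precisely to the two discrete off-by-$1$ progressions $(p, *, q)$ with midpoint $\lfloor\frac{p+q}{2}\rfloor$ or $\lceil\frac{p+q}{2}\rceil$, and verifying the factor of $\frac{1}{2}$ comes out right — i.e.\ that the triangular distribution assigns mass exactly $\frac{1}{2}$ to each half. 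One also must check the measure-zero issue (the grid lines, and progressions with $d=0$) genuinely contributes nothing, and confirm that negative $d$ is automatically accounted for since $(a, a+2d)$ ranges over all of $[0,1]^2$ including $a+2d<a$; this mirrors the convention, stated earlier in the excerpt, of allowing $d\le 0$ in the discrete count $m_3$.
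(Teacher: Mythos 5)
Your proposal is correct and follows essentially the same route as the paper's proof: partitioning by the pair of beads containing the first and last terms (with total weight $1/\intLength^2$ per pair), splitting on the parity of the bead distance, and using the symmetry of the triangular distribution of the midpoint to get the factor $1/2$ in the odd case. The only difference is cosmetic — you phrase it as a direct measure decomposition over subsquares of $[0,1]^2$, while the paper phrases it as conditioning probabilities.
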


\begin{proof}
 Consider a randomly chosen progression in $[0, 1]$ {identified by its endpoints $(a, b)$}, and a fixed $\intLength$-bead coloring $c_b$.  We use a probabilistic proof, so we rewrite
\[
(\mu\times \mu)((a,b) \in [0, 1]^2 : (a, b) \mbox{ is monochromatic}) =: \mathbb{P}((a, b) \mbox{ monochromatic}),
\]
where $\mu\times \mu$ is the usual Lebesgue measure on $\mathbb{R}^2$. We will condition on which beads $S$ and $E$ contain $a$ and $b$.  Let $M$ be the bead containing the middle element of the progression.  Given a bead coloring of $[0,1]$, it is useful to define the distance between two beads $A$ and $B$, $d_b(A,B)$ as $0$ when $A=B$ and as one more than the number of other beads strictly between $A$ and $B$ otherwise. Note that when $d_b(S, E)$ is even, then $S, M$, and $E$ must form a $3$-AP of beads.  On the other hand, when $d_b(S, E)$ is odd, $S, M,$ and $E$ form an off-by-one progression and $M$ could be two possible beads depending on the internal positioning of $a$ and $b$ within $S$ and $E$.  Formally, {letting $\{\mathcal{B}_i\}_{i = 1}^\intLength$ be the set of beads,}
\begin{align}
    f_{[0, 1]}(c_b) &= \sum_{i, j} \mathbb{P}((a, b) \mbox{ monochromatic} | a \in \mathcal{B}_i, b \in \mathcal{B}_j) \cdot \mathbb{P}(a \in \mathcal{B}_i, b \in \mathcal{B}_j) \nonumber\\
    &= \sum_{d(\mathcal{B}_i, \mathcal{B}_j) \mbox{ \scriptsize even}} \mathbb{P}((a, b) \mbox{ monochromatic} | a \in \mathcal{B}_i, b \in \mathcal{B}_j) \cdot \mathbb{P}(a \in \mathcal{B}_i, b \in \mathcal{B}_j) \nonumber \\
    & + \sum_{d(\mathcal{B}_i, \mathcal{B}_j) \mbox{ \scriptsize odd}} \mathbb{P}((a, b) \mbox{ monochromatic} | a \in \mathcal{B}_i, b \in \mathcal{B}_j) \cdot \mathbb{P}(a \in \mathcal{B}_i, b \in \mathcal{B}_j) \label{eq:ConditionProb}
\end{align}
Now, $\mathbb{P}(a \in \mathcal{B}_i, b \in \mathcal{B}_j) = 1/\intLength^2$ for each $i$ and $j$ since $a$ and $b$ are independently and uniformly distributed among the beads.  Also, {because our coloring is fixed}, when $d_b(\mathcal{B}_i, \mathcal{B}_j)$ is even $\mathbb{P}((a, b) \mbox{ monochromatic} | a \in \mathcal{B}_i, b \in \mathcal{B}_j)$ is $0$ or $1$ depending on whether or not the beads $S, M,$ and $ E$ form a monochromatic 3-term AP.  Thus,
\begin{multline}
    \sum_{d(\mathcal{B}_i, \mathcal{B}_j) \mbox{ \scriptsize even}} \mathbb{P}((a, b) \mbox{ monochromatic} | a \in \mathcal{B}_i, b \in \mathcal{B}_j) \cdot \mathbb{P}(a \in \mathcal{B}_i, b \in \mathcal{B}_j) \\
    = m_3(c_b^*) \cdot \frac{1}{\intLength^2}. \label{eq:CondOne}
\end{multline}
When $d_b(\mathcal{B}_i, \mathcal{B}_j)$ is odd, there are two choices for $M$: $\mathcal{B}_{(i + j - 1)/2}$ or $\mathcal{B}_{(i + j + 1)/2}$.  Thus, we can condition on these two choices:
\begin{align*}
&\mathbb{P}((a, b) \mbox{ monochromatic} | a \in \mathcal{B}_i, b \in \mathcal{B}_j) \\
&\hspace{1em}=\mathbb{P}((a, b) \mbox{ mono.} | a \in \mathcal{B}_i, b \in \mathcal{B}_j, M = \mathcal{B}_{(i + j - 1)/2}) \cdot \mathbb{P}(M = \mathcal{B}_{(i + j - 1)/2}|a \in \mathcal{B}_i, b \in \mathcal{B}_j)\\
&\hspace{1em}+ \mathbb{P}((a, b) \mbox{ mono.} | a \in \mathcal{B}_i, b \in \mathcal{B}_j, M = \mathcal{B}_{(i + j + 1)/2}) \cdot \mathbb{P}(M = \mathcal{B}_{(i + j + 1)/2}|a \in \mathcal{B}_i, b \in \mathcal{B}_j)
\end{align*}
Here, $\mathbb{P}(M = \mathcal{B}_{(i + j - 1)/2}| a \in \mathcal{B}_i, b \in \mathcal{B}_j) = \mathbb{P}(M = \mathcal{B}_{(i + j + 1)/2}| a \in \mathcal{B}_i, b \in \mathcal{B}_j) = 1/2$ because $a$ and $b$ are positioned uniformly within $S$ and $E$.  Additionally, 
\[
\mathbb{P}((a, b) \mbox{ monochromatic} | a \in \mathcal{B}_i, b \in \mathcal{B}_j, M = \mathcal{B}_{(i + j - 1)/2})
\]
is $0$ or $1$ depending on whether the off-by-1 progression in $c_b^*$ is monochromatic or not.  Hence, these two terms combined simplify to
\begin{align}
    \sum_{d(\mathcal{B}_i, \mathcal{B}_j) \mbox{ \scriptsize odd}} \mathbb{P}((a, b) \mbox{ monochromatic} | a \in \mathcal{B}_i, b \in \mathcal{B}_j) \cdot \mathbb{P}(a \in \mathcal{B}_i, b \in \mathcal{B}_j) \nonumber \\
    = \frac{1}{2\intLength^2} m_3'(c_b^*). \label{eq:CondTwo}
\end{align}
Plugging in Equations \eqref{eq:CondOne} and \eqref{eq:CondTwo} into Equation \eqref{eq:ConditionProb} completes the proof.
\end{proof}

Much of the rest of this paper will deal with a particular class of colorings called ``block colorings" which we now define. Informally, they are partitions of $I$ into disjoint intervals which are alternately colored red and blue. 

\begin{definition}
    For a finite collection of endpoints $\{x_i\}$ such that $0=x_0<x_1<x_2<\cdots<x_{\numBlocks-1}<x_\numBlocks=1$,  we define the associated ``block" coloring as the coloring where the $\numBlocks$ intervals $J_i=(x_{i-1},x_{i})$ (for $i \in \{1,2,\ldots,\numBlocks\}$) are all monochrome and alternate in color. 
\end{definition}

Note that the colors assigned to the endpoints $\{x_i\}$ (or indeed to any points within a measure zero set) do not matter.  With these definitions, we can now compare the performance of discrete colorings with their continuous analogues. 
\begin{lem} \label{cor:UniformError}
Let ${\mathcal{C}(\intLength, \numBlocks)}$
be the set of {$2$-}colorings of $[\intLength]$ with at most $\numBlocks$ contiguous blocks of colors.  For any coloring $c \in {\mathcal{C}(\intLength, \numBlocks)}$, let $c_*$ be the corresponding block coloring of $[0, 1]$ where the interval $[(i-1)/\intLength, i/\intLength)$ is colored blue by $c_*$ if and only if $i \in [\intLength]$ is colored blue by $c$.  Then, 
\[
\max_{c \in {\mathcal{C}(\intLength, \numBlocks)}} \left| f_{[0, 1]}(c_*) - f_{[\intLength]}(c) \right| = O\left(\frac{\numBlocks}{\intLength} \right).
\]
Here, there exists a $C > 0$ independent of $\intLength$ and $\numBlocks$ such that  $|O(\numBlocks/\intLength)| < C\numBlocks/\intLength$ for all positive integers $\numBlocks$ and $\intLength$.
\end{lem}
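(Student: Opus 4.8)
The plan is to compare $f_{[0,1]}(c_*)$ and $f_{[\intLength]}(c)$ by counting progressions directly and controlling the discrepancy coming from the beads adjacent to color changes. First I would apply Lemma \ref{lem:beadcompute} with the $\intLength$-bead coloring $c_*$: its associated discrete coloring $c_*^*$ is exactly $c$, so
\[
f_{[0,1]}(c_*) = \frac{m_3(c) + m_3'(c)/2}{\intLength^2}.
\]
On the discrete side, $f_{[\intLength]}(c) = m_3(c)/\AP_3(\intLength)$, and a short count gives $\AP_3(\intLength) = \intLength^2/2 + O(\intLength)$ (choosing first and last term of the same parity, with $d$ allowed to be any integer, including nonpositive). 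Hence $f_{[\intLength]}(c) = 2m_3(c)/\intLength^2 + O(m_3(c)/\intLength^3) = 2m_3(c)/\intLength^2 + O(1/\intLength)$, since $m_3(c) = O(\intLength^2)$. So the two quantities we must compare are, up to an $O(1/\intLength)$ error, $(m_3(c)+m_3'(c)/2)/\intLength^2$ versus $2m_3(c)/\intLength^2$; equivalently I need to show $|m_3'(c) - 3m_3(c)| = O(\numBlocks \intLength)$. Wait — more carefully, the bead measure counts ordered pairs $(a,b)$, so the ``$m_3$'' there counts 3-APs of beads indexed by ordered endpoint pairs, which is $2m_3(c) + O(\intLength)$ in the usual convention; I would fix the bookkeeping so that the comparison reduces cleanly to bounding $m_3'(c)$ against $m_3(c)$ up to an $O(\numBlocks\intLength)$ term.

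The key observation making this work is that off-by-1 APs and genuine 3-APs differ only near color boundaries. Given a 3-AP $(a, a+d, a+2d)$ in $[\intLength]$ that is monochromatic, the two off-by-1 progressions $(a, a+d, a+2d\pm 1)$ are also monochromatic \emph{unless} $a+2d$ and $a+2d\pm1$ lie in different blocks, i.e. unless $a+2d$ is within distance $1$ of a block boundary. Since there are at most $\numBlocks - 1$ boundaries, for each of the at most $\intLength$ choices of the common difference $d$ (and symmetrically by re-centering) there are only $O(\numBlocks)$ ``bad'' values of the relevant endpoint, giving $O(\numBlocks \intLength)$ progressions total on which the monochromaticity of a 3-AP and its off-by-1 neighbors can disagree. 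Running this comparison in both directions — every monochromatic off-by-1 AP arises from a nearby monochromatic 3-AP except near a boundary — yields $|m_3'(c) - 2 m_3(c)| = O(\numBlocks \intLength)$ (the factor $2$ accounting for the two choices of sign in $\pm 1$, matched against the correct normalization of $m_3$ in the bead count). Substituting back, all the main terms cancel and the remaining error is $O(\numBlocks\intLength)/\intLength^2 + O(1/\intLength) = O(\numBlocks/\intLength)$, with a constant $C$ that depends only on the combinatorics above and not on $\intLength$ or $\numBlocks$; taking the maximum over $c \in \mathcal{C}(\intLength,\numBlocks)$ preserves this uniform bound.

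The main obstacle I anticipate is purely bookkeeping: getting the normalization constants exactly right between the three counts (the Lebesgue measure of monochromatic APs in $[0,1]^2$, which weights by ordered endpoint pairs; $m_3(c)$ and $m_3'(c)$, which count unordered-or-signed progressions; and $\AP_3(\intLength)$ with the convention that $d$ ranges over all integers). A sign error or a missing factor of $2$ there would make the main terms fail to cancel. The geometric estimate — that discrepancies are confined to an $O(\numBlocks)$-neighborhood of the $O(\numBlocks)$ block boundaries, uniformly in the common difference — is robust and is where the $\numBlocks/\intLength$ rate genuinely comes from; once the constants are pinned down, the $O$-bound follows immediately and uniformly.
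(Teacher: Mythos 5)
Your proposal follows essentially the same route as the paper's proof: apply Lemma \ref{lem:beadcompute} with $(c_*)^*=c$, establish $m_3'(c) = 2m_3(c) + O(\numBlocks\intLength)$ by pairing each AP with its two off-by-1 perturbations and observing that monochromaticity can only disagree when the perturbed endpoint sits beside one of the $O(\numBlocks)$ color boundaries (with $O(\intLength)$ choices of the common difference each), and then compare $\AP_3(\intLength)$ with $\intLength^2/2$. The bookkeeping ambiguity you flag disappears under the paper's stated convention that $d$ ranges over all integers (including $d\leq 0$), so $m_3$, $m_3'$, and $\AP_3$ are already the ``ordered'' counts used in Lemma \ref{lem:beadcompute}; the correct target is $|m_3'(c)-2m_3(c)| = O(\numBlocks\intLength)$, which you do ultimately state and justify (the intermediate ``$|m_3'-3m_3|$'' is just an arithmetic slip).
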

\begin{proof}
Our proof will use Lemma \ref{lem:beadcompute} to rewrite $f_{[0, 1]}(c_*)$ in terms of $f_{[\intLength]}(c)$.  Before proceeding with this, we will interpret the number of off-by-1 monochromatic APs induced by $c$, $m_3'(c)$, in terms of the regular monochromatic APs, $m_3(c)$.  We claim the following:
\begin{equation} \label{eq:OffBy1ToRegular}
m_3'(c) = 2m_3(c) + O(\numBlocks \intLength).
\end{equation}
To verify this, note that each AP $(a, a + d, a + 2d)$ in $[\intLength]$ corresponds {almost} bijectively to {a pair of} off-by-1 APs by moving the first or last endpoint inwards by one: $(a + 1, a + d, a + 2d)$ or $(a, a + d, a + 2d - 1)$.  (When $\intLength$ is odd, this misses exactly two off-by-1 APs: $(1, (\intLength-1)/2, \intLength)$ and $(1, (\intLength+1)/2, \intLength)$.  When $\intLength$ is even, this is truly a bijection.)

Using this near bijection, we now compare when APs and off-by-1 APs are monochromatic.  Under this contraction action, the only time an AP $(a, a + d, a + 2d)$ is monochromatic while one of its corresponding off-by-1 APs is not monochromatic is when $a$ or $a + 2d$ is adjacent to a number of the opposite color, and the same could be said if the original AP is not monochromatic but the off-by-1 AP is.  If our coloring only has $\numBlocks$ intervals total, there are only $\numBlocks-1$ ways to {position $a$ immediately before a} color change, and similarly only $\numBlocks - 1$ ways to {position $a + 2d$ immediately after} a color change.  {Since $d$ can still be chosen freely,} there are $O(\numBlocks \intLength)$ possible off-by-1 APs that disagree with their corresponding APs on being monochromatic, verifying our claim.

Now, in the notation of Lemma \ref{lem:beadcompute}, we see that $(c_*)^* = c$.  Thus,
\begin{align*}
f_{[0, 1]}(c_*) &= \frac{m_3(c)}{\intLength^2} + \frac{m_3'(c)/2}{\intLength^2}\\
&= \frac{2m_3(c) + O(\numBlocks \intLength)}{\intLength^2}\\
&= \frac{m_3(c)}{\intLength^2/2} + O\left(\frac{\numBlocks}{\intLength}\right).
\end{align*}
The proof of the lemma will be complete if we can verify the following:
\[
\frac{m_3(c)}{\intLength^2/2} = f_{[\intLength]}(c) + O\left(\frac{1}{\intLength}\right).
\]
To see this, recall that by definition $f_{[\intLength]}(c) = m_3(c)/\AP_3(\intLength)$, so that
\begin{equation} \label{eq:APOrderMagnitude}
\frac{m_3(c)}{\intLength^2/2} - f_{[\intLength]}(c) = \frac{m_3(c)}{\AP_3(\intLength)} \cdot \frac{\AP_3(\intLength) - \intLength^2/2}{\intLength^2/2}.
\end{equation}
We have $m_3(c) \leq \AP_3(\intLength)$, so that the first fraction {on the right in Equation \eqref{eq:APOrderMagnitude}} is at most $1$.  Next, note that $\AP_3(\intLength) = \intLength^2/2 + O(\intLength)$: it is easy to compute this explicitly for when $\intLength$ is even or odd.  But, intuitively, if we pick two numbers $x$ and $y$ from $[\intLength]$ at random, there are $\intLength^2$ ways to do this, and about half the time $x - y$ is even and these correspond to the start and end of a $3$-AP.  Therefore, $\AP_3(\intLength) - \intLength^2/2 = O(\intLength)$, and plugging this into Equation \eqref{eq:APOrderMagnitude} completes the proof with
\[
\frac{m_3(c)}{\intLength^2/2} - f_{[\intLength]}(c) = O\left(\frac{1}{\intLength}\right).
\]
\end{proof}

Finally, we end this section with the result rigorously justifying our conversion between discrete and continuous colorings.
\begin{lem} \label{cor:DiscToContinuousFinal}
Let $\mathcal{S}_\numBlocks$ be the block {$2$-}colorings of $[0, 1]$ with at most $\numBlocks$ blocks, and let ${\mathcal{C}(\intLength, \numBlocks)}$ be the {$2$-}colorings of $[\intLength]$ with at most $\numBlocks$ contiguous blocks, where $\numBlocks = o(\intLength)$ as $\intLength$ approaches infinity.  Then, minimizing monochromatic APs over $\mathcal{S}_\numBlocks$ is the same as minimizing monochromatic APs over ${\mathcal{C}(\intLength, \numBlocks)}$ in the following sense:
\[
\lim_{\intLength \to \infty} \left| \inf_{c \in \mathcal{S}_\numBlocks} f_{[0, 1]}(c) - \min_{c \in {\mathcal{C}(\intLength, \numBlocks)}} f_{[\intLength]}(c) \right| = 0.
\]
\end{lem}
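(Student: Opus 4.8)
The plan is to squeeze the two quantities together: I will show
\[
\Bigl| \inf_{c \in \mathcal{S}_\numBlocks} f_{[0, 1]}(c) - \min_{c \in {\mathcal{C}(\intLength, \numBlocks)}} f_{[\intLength]}(c) \Bigr| = O\!\left(\frac{\numBlocks}{\intLength}\right),
\]
with the implied constant independent of $\numBlocks$ and $\intLength$; then the hypothesis $\numBlocks = o(\intLength)$ forces the right-hand side to $0$ as $\intLength \to \infty$. The bound is proved by two one-sided inequalities, each resting on Lemma \ref{cor:UniformError}, together with the finiteness of $\mathcal{C}(\intLength,\numBlocks)$ (so the $\min$ is attained) and Lemma \ref{lem:continuity}.

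\textbf{The inequality $\inf_{\mathcal{S}_\numBlocks} f_{[0,1]} \le \min_{\mathcal{C}(\intLength,\numBlocks)} f_{[\intLength]} + O(\numBlocks/\intLength)$.} Given any discrete coloring $c \in \mathcal{C}(\intLength, \numBlocks)$, the coloring $c_*$ of $[0,1]$ from Lemma \ref{cor:UniformError} fattens each integer into an interval of length $1/\intLength$; consecutive integers of the same color merge into one block, so $c_*$ is a block coloring with at most $\numBlocks$ blocks, i.e.\ $c_* \in \mathcal{S}_\numBlocks$. Lemma \ref{cor:UniformError} gives $f_{[0,1]}(c_*) \le f_{[\intLength]}(c) + O(\numBlocks/\intLength)$ uniformly in $c$, hence $\inf_{\mathcal{S}_\numBlocks} f_{[0,1]} \le f_{[0,1]}(c_*) \le f_{[\intLength]}(c) + O(\numBlocks/\intLength)$; minimizing over $c \in \mathcal{C}(\intLength,\numBlocks)$ completes this direction.

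\textbf{The inequality $\min_{\mathcal{C}(\intLength,\numBlocks)} f_{[\intLength]} \le \inf_{\mathcal{S}_\numBlocks} f_{[0,1]} + O(\numBlocks/\intLength)$.} Fix $\epsilon > 0$ and choose $c \in \mathcal{S}_\numBlocks$ with endpoints $0 = x_0 < \cdots < x_\numBlocks = 1$ and $f_{[0,1]}(c) < \inf_{\mathcal{S}_\numBlocks} f_{[0,1]} + \epsilon$. Round each $x_i$ to its nearest multiple of $1/\intLength$; since $x_0$ and $x_\numBlocks$ are fixed, the rounded values are nondecreasing in $[0,1]$ and determine an $\intLength$-bead coloring $\widehat{c}$. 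Each endpoint moves by at most $1/(2\intLength)$, and rounding can only collapse blocks (never create one), so $\widehat{c}$ has at most $\numBlocks$ blocks and $d(c, \widehat{c}) \le \numBlocks/(2\intLength)$; Lemma \ref{lem:continuity} then yields $f_{[0,1]}(\widehat{c}) \le f_{[0,1]}(c) + O(\numBlocks/\intLength)$. Let $c' \in \mathcal{C}(\intLength, \numBlocks)$ be the discrete coloring of $[\intLength]$ whose $i$-th entry is the color of the $i$-th bead of $\widehat{c}$, so that $(c')_* = \widehat{c}$ in the notation of Lemma \ref{cor:UniformError}. Applying that lemma, $f_{[\intLength]}(c') \le f_{[0,1]}(\widehat{c}) + O(\numBlocks/\intLength)$, and chaining the inequalities gives $\min_{\mathcal{C}(\intLength,\numBlocks)} f_{[\intLength]} \le f_{[\intLength]}(c') \le \inf_{\mathcal{S}_\numBlocks} f_{[0,1]} + \epsilon + O(\numBlocks/\intLength)$. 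Since $\epsilon > 0$ is arbitrary, the $\epsilon$ drops out, and combining the two inequalities proves the displayed estimate.

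\textbf{The main obstacle.} With Lemma \ref{cor:UniformError} in hand this is largely bookkeeping; the one genuinely delicate point is the rounding step in the second inequality — one must verify that snapping a block coloring's endpoints onto the grid $\{i/\intLength\}$ produces a legitimate $\intLength$-bead coloring whose block count is still at most $\numBlocks$ (tiny blocks may vanish, which only helps) and whose disagreement set with the original has measure $O(\numBlocks/\intLength)$ with a constant free of $\numBlocks$ — and, relatedly, to be sure the $O(\cdot)$ in Lemma \ref{cor:UniformError} (and the factor $4$ in Lemma \ref{lem:continuity}) are uniform, so the estimate does not degrade as $\numBlocks$ is allowed to grow with $\intLength$.
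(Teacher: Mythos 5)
Your proposal is correct and follows essentially the same route as the paper's proof: both directions rest on Lemma \ref{cor:UniformError}, with the harder direction handled by rounding the block endpoints to the grid $\{i/\intLength\}$ and invoking Lemma \ref{lem:continuity}, exactly as in the paper. The only difference is cosmetic: you track explicit constants to get a quantitative $O(\numBlocks/\intLength)$ bound, whereas the paper phrases the same estimates as an $\epsilon$ argument.
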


Here, we consider block colorings of $[0, 1]$ where the edge of a block is at a possibly irrational number.  However, as we will see later, all optimal colorings of $[0, 1]$ with a fixed number of blocks must have rational endpoints.

\begin{proof}
This is mostly a standard $\epsilon$ argument, so let $\epsilon > 0$ be given.
We aim to show for all $\intLength$ sufficiently large,
\[
\left| \inf_{c \in \mathcal{S}_\numBlocks} f_{[0, 1]}(c) - \min_{c \in {\mathcal{C}(\intLength, \numBlocks)}} f_{[\intLength]}(c) \right| \leq \epsilon.
\]
We prove this in two halves, first proving the infimum is nearly bounded above by the minimum, and then arguing the reverse.  Consider any coloring $\tilde c \in \mathcal{S}_n$.  Then, by Lemma \ref{lem:continuity}, for every $\intLength$ sufficiently large, we can find a $\numBlocks$-block coloring $\tilde c_{\intLength}$ of $[0, 1]$ with endpoints of the form $r/\intLength$ for $r$ an integer such that
\begin{equation}\label{eq:InfBound1}
    \left| f_{[0, 1]}(\tilde c) - f_{[0, 1]}(\tilde c_{\intLength}) \right| < \epsilon/4.
\end{equation}
This is true because we can round each endpoint to the nearest $1/\intLength$.  Then, we define $\tilde c_\intLength^*$ to be the coloring of $[\intLength]$ where $i$ is colored blue if and only if the $i$th block of $\tilde c_\intLength$ is blue.  Note that $\tilde c_\intLength^*$ still only has at most $\numBlocks$ blocks, and that using the notation from Lemma \ref{cor:UniformError}, $(\tilde c_\intLength^*)_* = \tilde c_\intLength$.  So, from Lemma \ref{cor:UniformError}, for $\intLength$ sufficiently large (independent of the colorings $\tilde c, \tilde c_\intLength, \tilde c^*_\intLength$),
\begin{equation}\label{eq:InfBound2}
|f_{[0, 1]}(\tilde c_{\intLength}) - f_{[\intLength]}(\tilde c_{\intLength}^*)| = O(\numBlocks/\intLength). 
\end{equation}
By choosing $\intLength$ sufficiently large (independent of the colorings $\tilde c, \tilde c_\intLength, \tilde c^*_\intLength$), Equations \eqref{eq:InfBound1} and \eqref{eq:InfBound2} imply
\[
\min_{c \in {\mathcal{C}(\intLength, \numBlocks)}} f_{[\intLength]}(c) \leq f_{[\intLength]}(\tilde c_\intLength^*) < f_{[0, 1]}(\tilde c) + \epsilon
\]
where this bound holds for all $\intLength$ sufficiently large and for all $\tilde c \in \mathcal{S}_n$. Therefore, for all $\intLength$ sufficiently large,
\[
\min_{c \in {\mathcal{C}(\intLength, \numBlocks)}} f_{[\intLength]}(c) \leq \inf_{c \in \mathcal{S}_\numBlocks} f_{[0, 1]}(c) + \epsilon.
\]

Now, we prove the reverse inequality: consider any coloring $\hat c$ of $[\intLength]$, and let $\hat c^*$ be the coloring of $[0, 1]$ induced by $\hat c$.   Again, from Lemma \ref{cor:UniformError}, for $\intLength$ sufficiently large,
\[
\left|f_{[\intLength]}(\hat c) - f_{[0, 1]}(\hat c^*) \right| < \epsilon,
\]
and since this is true for any coloring $\hat c \in {\mathcal{C}(\intLength, \numBlocks)}$, this proves that for $\intLength$ sufficiently large,
\[
\inf_{c \in \mathcal{S}_\numBlocks} f_{[0, 1]}(c) \leq \min_{c \in {\mathcal{C}(\intLength, \numBlocks)}} f_{[\intLength]}(c) + \epsilon.
\]
Combining this with the complementary inequality above completes the proof.

\end{proof}

At this point, we have justified that once bounding the number of blocks in our coloring, optimizing colorings of $[\intLength]$ is the same as optimizing colorings of $[0, 1]$.  We only make use of this result when the number of blocks $\numBlocks = 12$ because that is the conjectured global optimal number of blocks.  But, the same proof shows that switching to the continuous realm works whenever $\numBlocks = o(\intLength)$ as $\intLength \to \infty$.





\section{Proofs for the continuous case}\label{sec:contProofs}

\subsection{Colorings can be represented by BCG diagrams}
Consider any block coloring $c: [0, 1] \to \{0, 1\}$ of the interval with endpoints of the blocks given by $\{x_0, x_1, \ldots, x_\numBlocks\}$ with $x_0 = 0$ and $x_\numBlocks = 1$.  Without loss of generality, assume that the first block $(x_0, x_1)$ is colored blue, and alternate colors for each remaining interval.  Recall that the colors of the endpoints of the blocks can be assigned in any way, since this does not change the probability of selecting a monochromatic progression.

\begin{figure}[t]
    \centering
    \includegraphics[width=0.7\textwidth]{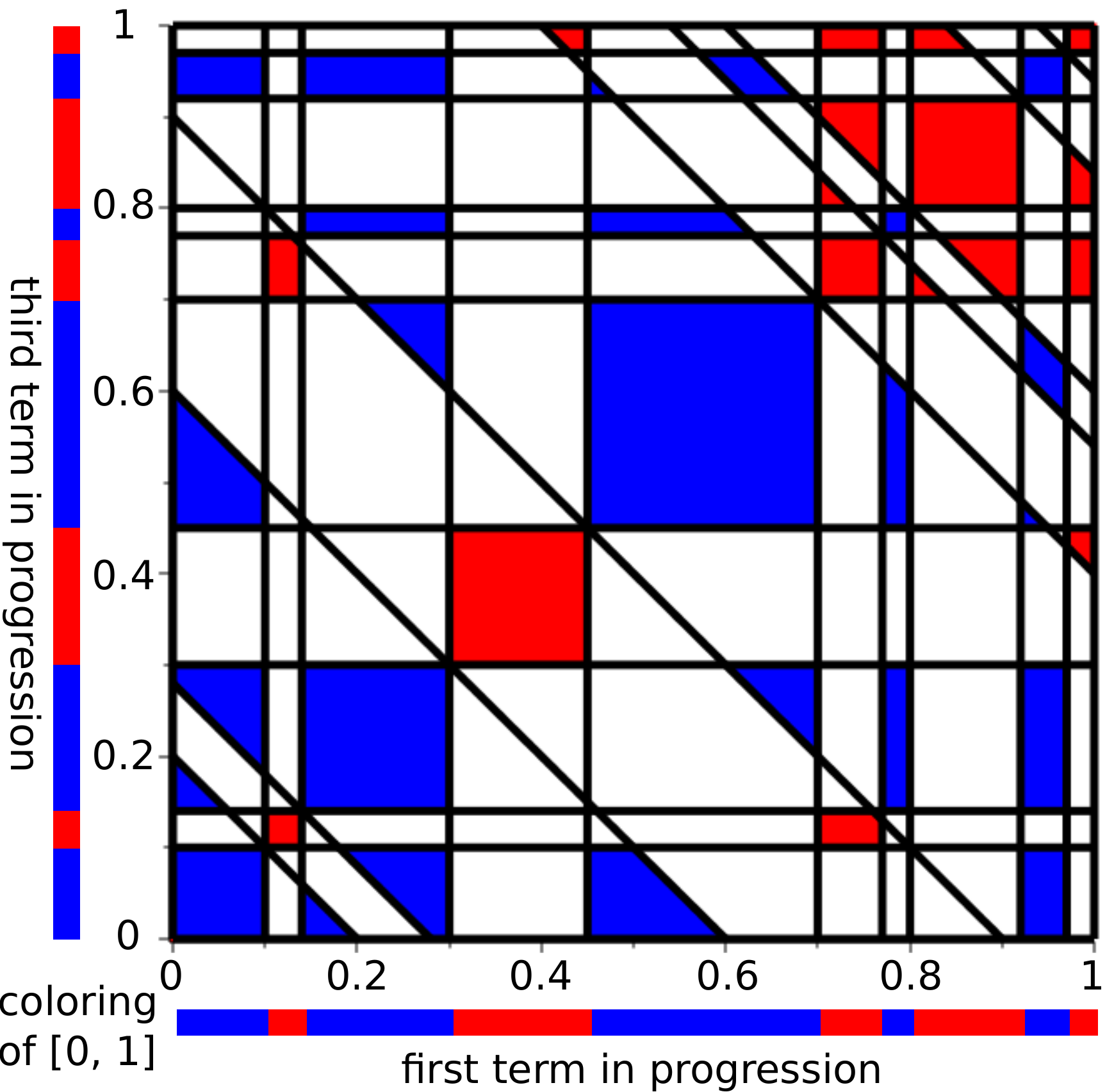}
    %
    \caption{Below the horizontal axis a coloring, $c$, is depicted.  The horizontal axis represents the first term $a$ in an arithmetic progression, and the vertical axis represents the third term $a + 2d$ in the progression.  Whenever a point in the diagram is colored red (blue), this corresponds to the progression $(a, a + d, a + 2d)$ being colored red (blue) by $c$.\\\label{fig:GrahamDiagram}}    
    \hrule
\end{figure}

In \cite{Butler:2010}, Butler, Costello, and Graham proposed a method of visualizing the monochromatic arithmetic progressions associated to a coloring in terms of diagrams like in Figure \ref{fig:GrahamDiagram}.  Any arithmetic progression $(a, a + d, a + 2d)$ can be identified uniquely by its first and last coordinates, which are represented by the horizontal and vertical axes of such a diagram.  Note that the diagram is divided into vertical strips, horizontal strips, and northwest/southeast diagonal strips.  Consider any region identified as the intersection of one horizontal, one vertical, and one diagonal strip.  For a block coloring, this region corresponds to a collection of monochromatic arithmetic progressions if and only if the indices of the vertical, horizontal, and diagonal strips defining the region all have matching parities.

Because the total area of the square in any diagram like Figure \ref{fig:GrahamDiagram} is one, the measure of the set of monochromatic sequences is equal to the sum of the areas of the red and blue regions.  In Theorem 2.1 of \cite{Butler:2010}, Butler et al.\ express the total colored area as the sum of two integrals involving an indicator function.  Their work applied to \emph{constellations}, a generalization of arithmetic progressions.  Here, we instead derive explicit polynomial equations for the areas.  Consider any one colored region in such a diagram.  As the endpoints $x_i$ are perturbed slightly, the region remains the same type of polygon although its dimensions may change.  This implies that the area of each region can be represented locally as a quadratic in the variables $\{x_i\}$.  Denote a block coloring $c$ by its list of endpoints $\mathbf{x} := (x_0, \ldots, x_\numBlocks)$.  Then, summing over all monochromatic regions shows that the measure of the monochromatic progressions, $f_{[0, 1]}(\mathbf{x})$, is locally quadratic in the $\{x_i\}$, too.  We now denote $f(\mathbf{x}) := f_{[0, 1]}(\mathbf{x})$.  
When we restrict $f$ to act on colorings with exactly $\numBlocks$ blocks, we will write $f(\mathbf{x}_\numBlocks)$.

As $\mathbf{x}$ varies, the regions in the diagram change polygon type.  Thus, for each $\numBlocks$, $f(\mathbf{x}_\numBlocks)$ is a piecewise function that is locally quadratic.  In order to minimize $f$ globally, we wish to identify the boundaries of these pieces in terms of $\mathbf{x}$.  The following lemma describes how to identify the polygons in such a diagram.

\begin{lem}\label{lem:ShapeLemma}
The region that is the intersection of the $i$th vertical strip, $j$th horizontal strip, and $k$th diagonal strip of a diagram is empty or forms a closed polygon.  The type of polygon is determined by testing whether each of the four values $\{x_{i} + x_{j}, x_{i} + x_{j+1}, x_{i+1} + x_{j}, x_{i+1} + x_{j+1}\}$ is greater than or less than the two values $\{2x_{k}, 2x_{k+1}\}$.  If this ordering is known, the area of the corresponding region can be expressed as a quadratic polynomial in the variables $\{x_{i}, x_{i+1}, x_{j}, x_{j+1}, x_{k}, x_{k+1}\}$.
\end{lem}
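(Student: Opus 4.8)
The plan is to work in the $(a, a+2d)$ diagram and directly analyze the region $R_{ijk}$ cut out by the $i$th vertical strip $\{x_i < a < x_{i+1}\}$, the $j$th horizontal strip $\{x_j < a+2d < x_{j+1}\}$, and the $k$th diagonal strip $\{2x_k < a + (a+2d) < 2x_{k+1}\}$. Here the diagonal strips arise because the middle term of the progression is $a + d = \tfrac{1}{2}\big(a + (a+2d)\big)$, so requiring the middle term to lie in block $k$ is exactly the constraint $2x_k < a + b < 2x_{k+1}$ where $b := a+2d$. Thus $R_{ijk}$ is the intersection of three ``slabs'' in the plane: two axis-parallel strips and one strip bounded by the lines $a + b = 2x_k$ and $a + b = 2x_{k+1}$ of slope $-1$. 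The intersection of finitely many half-planes is a (possibly empty, possibly unbounded, but here automatically bounded since it sits inside $[0,1]^2$) convex polygon, which already gives the first sentence of the lemma.

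Next I would pin down the combinatorial type of this polygon. The vertical and horizontal strips contribute the four corner points $(x_i, x_j), (x_i, x_{j+1}), (x_{i+1}, x_j), (x_{i+1}, x_{j+1})$ of the rectangle $[x_i, x_{i+1}] \times [x_j, x_{j+1}]$, and the two diagonal lines slice through this rectangle. Whether each diagonal line $a + b = 2x_k$ (resp. $2x_{k+1}$) passes above, below, or through a given corner $(x_p, x_q)$ is decided precisely by comparing $x_p + x_q$ with $2x_k$ (resp. $2x_{k+1}$). So the pattern of which of the four rectangle-corners survive the two diagonal cuts — equivalently, on which side of each diagonal line each corner lies — is governed exactly by the relative order of the four sums $\{x_i + x_j, x_i + x_{j+1}, x_{i+1} + x_j, x_{i+1} + x_{j+1}\}$ against the two thresholds $\{2x_k, 2x_{k+1}\}$. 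I would enumerate the resulting cases: since the four sums are themselves partially ordered (the two ``diagonal'' corner sums $x_i + x_{j+1}$ and $x_{i+1} + x_j$ both lie between $x_i + x_j$ and $x_{i+1} + x_{j+1}$), only a bounded list of genuinely distinct configurations occurs, and in each one the polygon $R_{ijk}$ is the rectangle with some subset of corners truncated by segments of the two diagonal lines — a triangle, quadrilateral, pentagon, or hexagon (or empty). This is essentially a finite bookkeeping exercise, best presented with a representative picture or a short table rather than all cases written out.

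Finally, for the area formula: once the combinatorial type is fixed, the vertices of $R_{ijk}$ are each either an original rectangle corner $(x_p, x_q)$ or an intersection of a diagonal line $a + b = 2x_m$ with one of the four rectangle edges $a = x_p$ or $b = x_q$; such an intersection point has coordinates that are affine (indeed, of the form $(x_p, 2x_m - x_p)$ or $(2x_m - x_q, x_q)$) in the variables $\{x_i, x_{i+1}, x_j, x_{j+1}, x_k, x_{k+1}\}$. Applying the shoelace formula to a polygon whose vertex coordinates are affine-linear in these six variables produces a polynomial of degree at most $2$ in them. So on the (relatively open) region of $\mathbb{R}^{\numBlocks+1}$ where the ordering of $\{x_i + x_j, x_i + x_{j+1}, x_{i+1} + x_j, x_{i+1} + x_{j+1}\}$ versus $\{2x_k, 2x_{k+1}\}$ is fixed, the area of $R_{ijk}$ is a fixed quadratic polynomial in those six endpoints, as claimed.

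The main obstacle is the case analysis in the middle step: being careful that the list of comparisons in the statement (the four sums against the two thresholds) really does determine the polygon type, including correctly handling the degenerate and empty sub-cases, and organizing the enumeration so it is both complete and not unwieldy. The convexity argument and the shoelace/degree-counting argument are routine; the value of the lemma is precisely in certifying that this particular finite set of inequality tests is the right invariant, which is exactly what makes the later linear-programming enumeration of pieces of $f$ possible.
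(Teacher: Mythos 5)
Your proposal is correct and follows essentially the same route as the paper: identify the region as the intersection of the axis-parallel rectangle with the slope $-1$ diagonal strip, observe that its shape is decided by comparing the four corner sums with $2x_k$ and $2x_{k+1}$, and deduce the quadratic area formula from vertices that are affine in the six endpoints (the paper relegates the resulting 20-case bookkeeping to its appendix, just as you anticipate). Your explicit shoelace/degree-counting justification is a slightly more detailed account of what the paper calls ``basic geometric arguments,'' but it is not a different method.
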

\begin{proof}
In the diagrams like in Figure \ref{fig:GrahamDiagram}, the horizontal lines all are given by $\{y = x_i\}_{i = 0}^\numBlocks$ and the vertical lines by $\{x = x_i\}_{i = 0}^\numBlocks$.  At any point $(x, y)$ in the diagram, the middle value in the corresponding arithmetic progression is $(x+y)/2$, and setting this equal to any endpoint in our coloring implies that the diagonal lines are given by $\{y = 2x_i - x\}_{i = 0}^\numBlocks$.  As described above, for any triple $(i, j, k)$ where $i, j, k \in \{0, \ldots, 12\}$ all have matching parities, the intersection of the $i$th vertical strip, $j$th horizontal strip, and $k$th diagonal strip corresponds to a region of monochromatic arithmetic progressions.

\begin{figure}
    \centering
    \includegraphics[width=.7\textwidth]{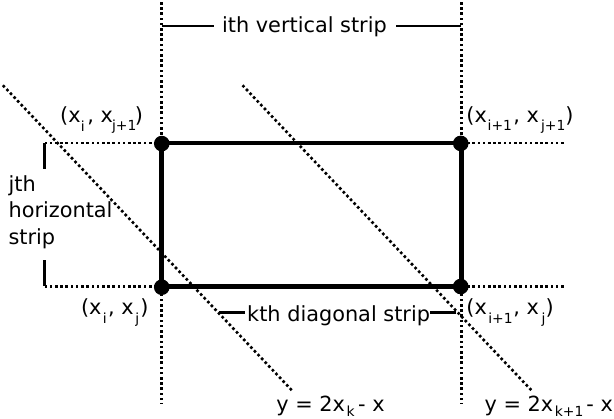}
    \caption{Above is the intersection of the $i$th vertical strip, $j$th horizontal strip, and $k$th diagonal strip determined by a block coloring with endpoints $\mathbf{x} = (x_0, x_1, \ldots, x_\numBlocks)$. Whether the intersection is empty can be determined by comparing the diagonal lines $\{y = 2x_{k} - x$, $y = 2x_{k+1} - x\}$ to the corners of the box $\{(x_{i}, x_{j}), (x_{i+1}, x_{j+1})\}$.}
    \label{fig:SingleRegion}
    \ \\\hrule
\end{figure}
To determine the shape of the region of the monochromatic progressions, first consider the rectangle formed by the intersection of the $i$th vertical strip and $j$th horizontal strip.  The corners of this rectangle have coordinates $(x_{i}, x_{j})$, $(x_{i+1}, x_{j})$, $(x_{i}, x_{j+1}),$ and $(x_{i+1}, x_{j+1})$, as labelled in Figure \ref{fig:SingleRegion}.  In order for the intersection of this rectangle with the $k$th diagonal strip to be non-empty, we need the upper diagonal line $y = 2x_{k+1} - x$ to be above the lower left corner of the rectangle, $(x_{i}, x_{j})$, and the lower diagonal line $y = 2x_{k} - x$ to be below the upper right corner of the rectangle, $(x_{i+1}, x_{j+1})$.  This is the same as requiring the inequalities $2x_{k+1} \geq x_{i} + x_{j}$ and $2x_{k} \leq x_{i+1} + x_{j+1}$.

\newlength{\figBoxWidthB}
\setlength{\figBoxWidthB}{.25\linewidth}
\newlength{\figHeightB}
\setlength{\figHeightB}{.25\linewidth}

\newcounter{FigNum}
\setcounter{FigNum}{1}

\newcommand{\polyRegBoxB}[3]{%
    \begin{minipage}{\linewidth}
    \framebox[1.02\width]{\begin{minipage}[c]{\figBoxWidthB}
    \vspace{0pt}
    \includegraphics[height = \figHeightB]{#1}
    \end{minipage}
    \begin{minipage}[c]{.72\linewidth}
    \vspace{0pt}
    $\begin{aligned}
    &\text{\textbf{Characterizing Inequalities:}}\\
    &\quad #2\\
    &\text{\textbf{Region Area:}}\\
    &\quad #3
    \end{aligned}$
    \end{minipage}}
\end{minipage}\vskip6pt
}

\begin{figure}
    \centering

\polyRegBoxB{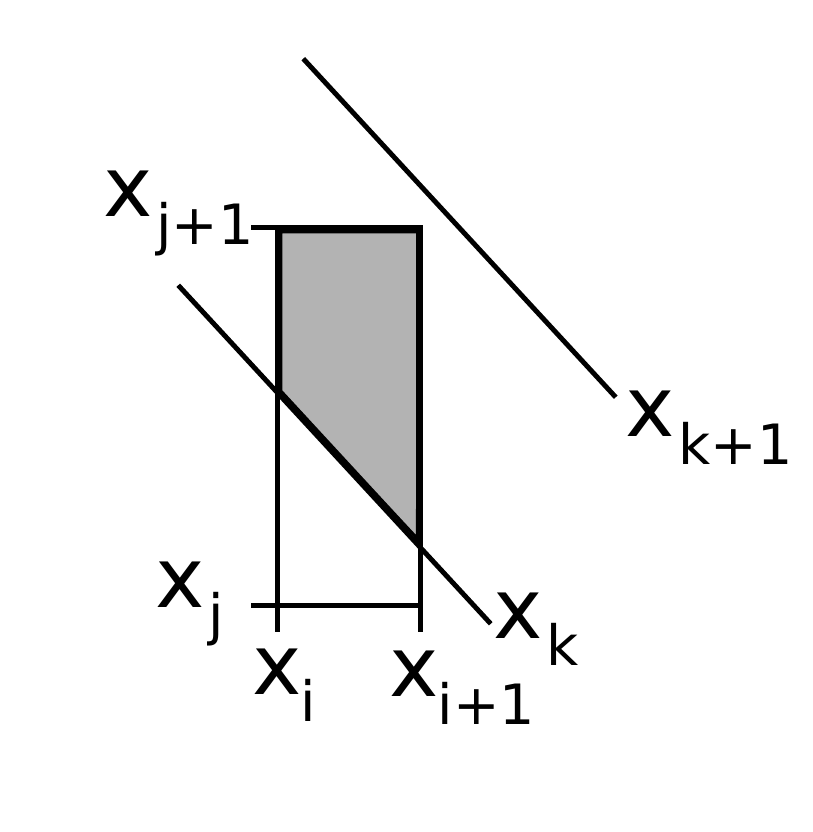}{x_{i+1} + x_{j} \leq 2x_{k} \leq x_{i} + x_{j+1} \leq x_{i+1} + x_{j+1} \leq 2x_{k+1}}{(x_{i+1} - x_{i}) ( x_{j+1} + x_{i}/2 + x_{i+1}/2  - 2x_{k})}
\polyRegBoxB{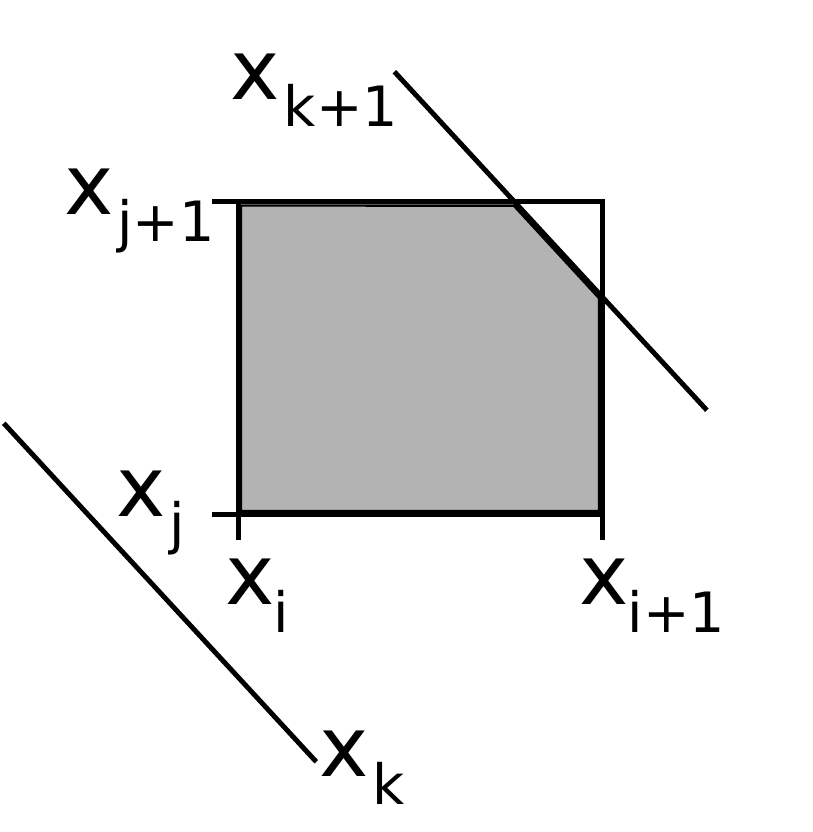}{2x_{k} \leq x_{i} + x_{j},\\ &\quad  \max(x_{i} + x_{j+1}, x_{i+1} + x_{j}) \leq 2x_{k+1} \leq x_{i+1} + x_{j+1}}{(x_{i+1} - x_{i})(x_{j+1} - x_{j}) - (2x_{k+1} - x_{j+1} - x_{i+1})^2/2}
    
    \caption{Illustrated here are two different ways that the $k$th diagonal strip can intersect with the $i$th horizontal and $j$th vertical strip in a coloring.  The resulting monochromatic region is shaded in gray, and the area of the region is given as a quadratic in $\mathbf{x}$ below the diagram.  The type of polygon is determined by the partial permutation given below each diagram.  The other 18 possibilities are enumerated in Appendix \ref{sec:AllRegions}.}
    \label{fig:SampleRegions}
    \ \\\hrule
\end{figure}

Additionally, the type of polygon formed by the intersection of the strips is determined by whether the two diagonal lines $y = 2x_{k} - x$ and $y = 2x_{k+1} - x$ are above or below each of the four corners of the box.  For any specific relationship between the lines and the four corners, some basic geometric arguments allow us to find the area of the polygon enclosed by the strips in terms of $\{x_{i}, x_{i+1}, x_{j}, x_{j+1}, x_{k}, x_{k+1}\}$.  It turns out that there are 20 possible arrangements of the lines that yield distinct polygons.  In Figure \ref{fig:SampleRegions}, two possibilities are given, along with the corresponding quadratic equations for their areas.  The full list of 20 polygons is given in Appendix \ref{sec:AllRegions}.
\end{proof}

\subsection{Enumerating BCG diagrams} \label{sec:CountConfigurations}
Now that we have identified criteria that allow us to determine the shape of each monochromatic region in a diagram, we wish to compute how many collections of shapes are possible between all diagrams.  In other words, we now know that for each fixed $\numBlocks$ the function $f(\mathbf{x}_\numBlocks)$ is a piecewise quadratic function in the endpoints $\mathbf{x}_\numBlocks$, but we would like to identify how many pieces it has.  From Lemma \ref{lem:ShapeLemma}, we have that the ordering of the pairwise sums $\{x_i + x_j\}_{0 \leq i < j \leq \numBlocks}$ completely determines the shapes in the diagram.  This is sequence A237749 in the On-Line Encyclopedia of Integer Sequences.  Currently only $9$ elements in the sequence are known, ending with $771,505,180$ possible orderings for the pairwise sums with $\numBlocks = 8$.  Thus, this sequence grows much too quickly to be useful in checking every piece of $f(\mathbf{x}_\numBlocks)$ for $\numBlocks = 12$.

Note that if $f(\mathbf{x}_\numBlocks)$ were everywhere concave up, it could only have a single local minimum, which would necessarily be the global minimum as well.  Since the conjectured optimum solution is a local minimum, the proof would be complete for any coloring with a finite number of intervals regardless of whether the coloring is antisymmetric.  Additionally, a gradient descent algorithm would quickly lead to the global minimum even if it were unknown in advance.  Unfortunately, through computational search, it is easy to find pieces of $f(\mathbf{x}_\numBlocks)$ that are not concave up.  For this reason, we must search for local minima on each piece of $f(\mathbf{x}_\numBlocks)$ individually in order to guarantee the conjectured coloring is globally minimal.

Counting pairwise orderings of $\{x_i + x_j\}_{0 \leq i < j \leq \numBlocks}$ is closely related to other combinatorial problems.  In Figure \ref{fig:CombinatorialConnections}, we find that any ordering of $\{x_i + x_j\}$ could be encoded within a standard Young tableau of inverse staircase shape.  Here, the filling of the $(r, s)$ entry of the tableau (where the top of the tableau is the $(0, 0)$ entry) is equal to the position of $x_r + x_s$ when all pairs $\{x_i + x_j\}$ are placed in increasing order.

\begin{figure}
    \centering
    \includegraphics[width=.5\textwidth]{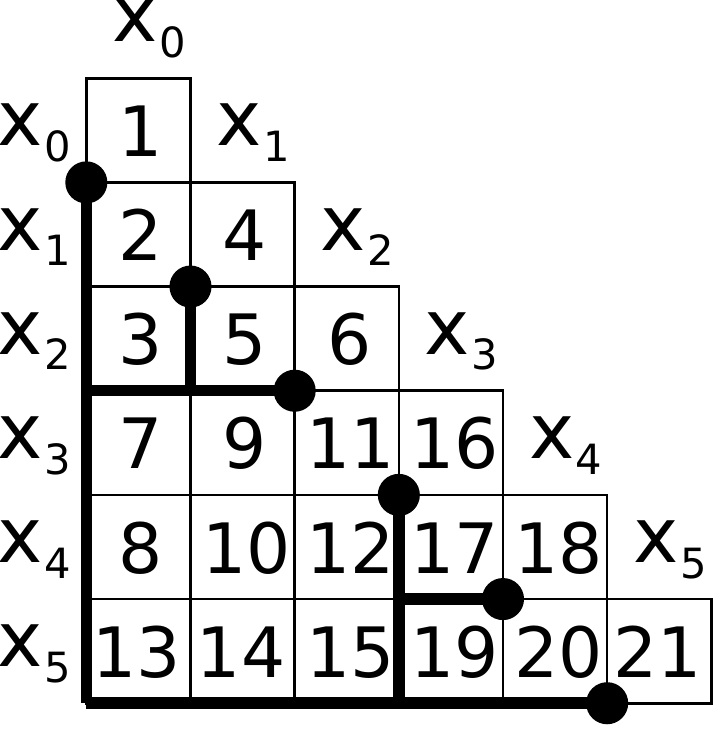}
    \caption{Above is pictured a standard Young diagram corresponding to the choice of endpoints $x_0 = 0, x_1 = 0.19, x_2 = 0.9, x_3 = 0.6, x_4 = 0.65$, and $x_5 = 1$.  The entry labelled $3$ tells us that $x_0 + x_2$ is the third smallest in the ordering of pairs $\{x_i + x_j\}_{i, j = 0}^5$.  Additionally, the non-crossing lattice paths from the line $y = -x$ to the lower left of the Young diagram partition the diagram into regions where all the corresponding pairwise sums are between two consecutive values $2x_{k-1}$ and $2x_{k}$ for some $k$.}
    \label{fig:CombinatorialConnections}
    \ \\\hrule
\end{figure}

To simplify our computations, we make two restrictions.  First, because of numerical simulations in \cite{Butler:2010} and our own, we consider only colorings that are \emph{antisymmetric}: when reflected about the middle of the unit interval, almost every point in the coloring is swapped to the opposite color.  Equivalently, we require that $x_k + x_{\numBlocks - k} = 1$ for $0 \leq k \leq \numBlocks$.  Additionally, we do not need to know all of the relations in the total ordering of $\{x_i + x_j\}_{0 \leq i < j \leq \numBlocks}$ in order to determine a configuration.  Instead, for each pair $(i, j)$ with $i \neq j$, we search for the value of $k$ where $2x_{k - 1} \leq x_i + x_j \leq 2x_k$.  Thus, we look for the number of ways to insert the pairwise sums $\{x_i + x_j\}_{i \neq j}$ into the line, $0 = 2x_0 \leq 2x_1 \leq \cdots \leq 2x_\numBlocks = 2$.  In Figure \ref{fig:CombinatorialConnections}, this reframing corresponds to not needing to know the entire filling of the diagram, but instead having a family of non-crossing lattice paths each starting at different points down the diagonal.  This set-up is very similar to the Lindstr\"{o}m-Gessel-Viennot Lemma counting non-intersecting lattice paths, \cite{Lindstrom:1973, Gessel:1985}, which was instrumental in proving the conjecture counting the number of $n \times n$ alternating sign matrices, the story of which is told in \cite{Bressoud:1999}.

To determine the number of such partial permutations, we develop an algorithm that works recursively to verify whether growing partial permutations are possible.  Our implementation is similar to Miller and Peterson's geometric approach to solving questions about \emph{More Sums Than Differences} sets, \cite[Lemma 2.1]{Miller:2019}, and also similar to Laaksonen's approach to enumerating OEIS sequence A237749, \cite{Laaksonen:2019}.  Both of these problems plus the problem we study here could be phrased in terms of enumerating chambers in a hyperplane arrangement, potentially including a restriction to a specific cone within the hyperplane arrangement.  Enumerating chambers is a stream of research on its own (e.g. \cite{Stanley:2006, Dorpalen-Berry:2021}), and there are existing theorems counting chambers by using M\"{o}bius inversion, \cite{Zaslavsky:1975, Zaslavsky:1977}.  However, here we do not need to enumerate every chamber without restrictions because this is again equivalent to counting the possible total orderings of $\{x_i + x_j\}_{0 \leq i < j \leq \numBlocks}$.

\begin{lem}\label{lem:NumOrderings}
Consider antisymmetric block colorings with endpoints $(x_0, x_1, \ldots, x_\numBlocks)$ for $\numBlocks$ even, so that $x_k + x_{\numBlocks - k} = 1$ for $0 \leq k \leq \numBlocks$.  Then, the number of ways to insert the pairs $\{x_i + x_j\}_{i \neq j}$ into the ordering $0 = 2x_0 < 2x_1 < \cdots < 2x_\numBlocks = 2$ grows as follows, starting with $\numBlocks = 0$ and with $\numBlocks$ increasing by twos:
\[
1, 1, 3, 23, 357, 9391, 371219, \ldots
\]
\end{lem}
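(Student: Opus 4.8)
The plan is to verify the sequence $1,1,3,23,357,9391,371219,\ldots$ by direct computer-assisted enumeration, using the recursive/incremental construction of partial permutations hinted at in the discussion above, with feasibility of each partial configuration checked via mixed integer linear programming (GLPK). First I would fix $\numBlocks$ even and set up the free variables: by antisymmetry $x_k + x_{\numBlocks-k} = 1$, so the coloring is determined by $x_1 < x_2 < \cdots < x_{\numBlocks/2}$ in $(0,1/2)$ together with $x_0=0$, $x_{\numBlocks}=1$; a configuration is an assignment, to each unordered pair $\{i,j\}$ with $i\neq j$, of an index $k = k(i,j)$ with $2x_{k-1} \le x_i + x_j \le 2x_k$. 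The count we want is the number of such assignments that are \emph{realizable}, i.e., for which there exists a choice of $0 < x_1 < \cdots < x_{\numBlocks} = 1$ (obeying the antisymmetry constraints) satisfying the full system of linear inequalities encoding both the ordering of the $x_i$ and all the sandwiching constraints $2x_{k(i,j)-1} \le x_i + x_j \le 2x_{k(i,j)}$.

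The key steps, in order: (1) Observe that realizability of a configuration is exactly the feasibility of a system of (non-strict, after an $\epsilon$-perturbation argument, or strict) linear inequalities in the $x_i$, so an LP/MILP solver decides it. (2) Rather than iterating over all exponentially-many candidate assignments, build them incrementally: process the pairs $\{i,j\}$ in a fixed order (e.g. by the forced partial order coming from $i\le i', j\le j' \Rightarrow x_i+x_j \le x_{i'}+x_{j'}$, which matches the non-crossing lattice path picture in Figure \ref{fig:CombinatorialConnections}), maintaining a tree of partially-specified configurations; at each node, branch over the admissible values of $k$ for the next pair and call the solver to prune any branch whose partial inequality system is already infeasible. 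Monotonicity of the pairwise sums keeps the branching factor and the depth manageable, and every leaf of the surviving tree is a distinct realizable configuration. (3) Run this for $\numBlocks = 0, 2, 4, 6, 8, 10, 12$ and record the leaf counts; check they agree with $1,1,3,23,357,9391,371219$. (4) Argue correctness: each realizable configuration is counted exactly once because the pair-processing order is fixed and the branches at each node are disjoint, and no realizable configuration is pruned because the solver only prunes provably infeasible partial systems (a full realizable assignment restricts to a feasible partial system at every prefix).

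The main obstacle is \textbf{trusting the computation}: the claim is a bare assertion of integers produced by code, so the substance of the proof is (a) arguing the enumeration is exhaustive and non-redundant — this is the combinatorial content above and is routine once the monotone processing order is pinned down — and (b) controlling the numerical reliability of the MILP feasibility calls, since a solver answering ``infeasible'' on a system that is actually feasible (or vice versa) would corrupt the count. I would address (b) by working over the rationals where possible, using exact or certified LP feasibility (e.g. rational arithmetic, or verifying a Farkas certificate of infeasibility for each pruned branch), and by sanity-checking small cases ($\numBlocks \le 6$) against the OEIS-style hand/independent computation and against Laaksonen's related enumeration of A237749. A secondary, milder obstacle is scale: for $\numBlocks = 12$ there are $\binom{13}{2} = 78$ ordered-pair sums (fewer after antisymmetry identifications), and the surviving tree has $371219$ leaves, so the implementation must be efficient enough — but this is an engineering matter, not a mathematical one, and the code is the object of record, referenced in the lemma's statement.
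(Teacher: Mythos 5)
Your proposal matches the paper's proof in essentially every respect: the paper likewise builds partial systems of inequalities incrementally over the pairs $(i,j)$, prunes infeasible branches with GLPK, enforces strict inequalities via an auxiliary $\epsilon$ maximized as the objective (precisely to avoid degenerate solutions like $x_1=\cdots=x_{\numBlocks-1}=1/2$), and uses the rational-arithmetic version of the solver together with care about pair-processing order to make the $\numBlocks=12$ computation reliable and tractable. The only cosmetic differences are your suggestion of Farkas certificates (the paper instead relies on exact rational solves with a floating-point threshold check) and your cross-check against Laaksonen's A237749 data, neither of which changes the argument.
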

\begin{proof}
The key computational tool in our proof is linear programming: with existing linear programming packages like the GNU Linear Programming Kit (GLPK, \cite{GLPK}) 
we can easily check whether a single system of inequalities has a valid solution.  Thus, we create a running list of partial systems of inequalities that have valid solutions, and count in how many ways it is possible to extend each system with a single additional inequality.  Below, we give pseudocode for the algorithm we use, followed by a brief explanation of some of the technicalities required to make this code run correctly and efficiently.  The full code is posted online at
\url{https://cocalc.com/TorinGreenwood/MonochromeSequences/MonochromaticProgressions}.



{%
}%
\begin{Verbatim}[numbers = left, frame = topline, framesep=9pt, vspace = 0pt, commandchars=*\{\}, label = {Pseudocode to Enumerate BCG Diagrams}]
*textcolor{blue}{\\ Initialize a running list of partial systems of inequalities}
\end{Verbatim}

\begin{Verbatim}[numbers=left, firstnumber=last, vspace = 0pt, numbersep=12pt, commandchars=\\\{\}, codes={\catcode`$=3\catcode`_=8}, frame = none]
PartialInequalitiesOld = $\big\{\{0=x_0, x_k+x_{\numBlocks-k}=1$ for $0\leq{k}\leq\numBlocks,$
                           $x_k\leq{x_{k+1}}$ for $0\leq{k}\leq\numBlocks-1\}\big\}$
                           
\end{Verbatim}

\begin{Verbatim}[numbers = left, firstnumber=last, commandchars=*\{\}, codes={\catcode`$=3\catcode`_=8}, vspace = 0pt]
*textcolor{blue}{\\ For each partial system of inequalities (i.e. for each set of}
*textcolor{blue}{partial constraints), find all ways to add a new inequality}
*textcolor{blue}{$2x_k*leq{x_i+x_j}*leq{2x_{k+1}}$ by deciding where $x_i+x_j$ fits between}
*textcolor{blue}{successive $2x_k$}
\end{Verbatim}

\begin{Verbatim}[numbers=left, firstnumber=last, vspace = 0pt, numbersep=12pt, commandchars=\\\{\}, codes={\catcode`$=3\catcode`_=8}, frame = bottomline]
for $(i,j)$ with $i\neq{j}$: \label{eqn:forij}
   PartialInequalitiesNew = \{\}
   for $k$ from $i$ to $j-1$:
      for constraints in PartialInequalitiesOld:
         if constraints $\cup$ $\{2x_k<x_i+x_j<2x_{k+1}\}$ is valid:
            PartialInequalitiesNew $+=$ $\big\{$constraints
                                         $\cup \{2x_k<x_i+x_j<2x_{k+1}\}\big\}$
   PartialInequalitiesOld = PartialInequalitiesNew
return PartialInequalitiesOld
\end{Verbatim}

We now discuss some important aspects of our implementation with GLPK that ensured the code ran efficiently and correctly.  The uninterested reader may skip the rest of this proof without a loss of continuity.  First, mixed integer linear programs typically search to optimize a linear objective function in the variables $\mathbf{x}$ over a region of linear inequalities written in terms of $\mathbf{x}$.  Here, our goal was simply to check whether a system of linear inequalities was \emph{feasible,} meaning that a solution exists.  This can be achieved with linear programming by setting the objective function to be any constant, $C$, because the linear program will return a certificate $\mathbf{x}^*$ where the maximum is achieved.  When the objective function is constant, this is simply any feasible solution.

As an added layer of complexity, linear programming typically only allows for inequalities that are not strict.  However, exponentially many arrangements of the pairs $\{x_i + x_j\}_{i \neq j}$ can be achieved trivially by the solution $\mathbf{x}^* = (0, 1/2, 1/2, \ldots, 1/2, 1)$, since any sum of distinct endpoints $x_i + x_j$ would equal $1/2, 1,$ or $3/2$.  In fact, many such arrangements can only be achieved by these trivial solutions.  If we allow such solutions, it is not possible for the program to finish due to an explosion in the number of possible systems of inequalities.  To avoid this scenario, we force all inequalities in every system to be strict.  Thus, we introduce a single auxiliary variable $\epsilon$ that converts strict inequalities into weak inequalities.  For example, the strict inequalities $2x_k < x_i + x_j < 2x_{k + 1}$ become a pair of weak inequalities $2x_k + \epsilon \leq x_i + x_j$ and $x_i + x_j + \epsilon \leq 2x_{k + 1}$.  After adding $\epsilon$ to every inequality, we change the objective function from a constant $C$ to the variable $\epsilon$, and search for the maximum value of $\epsilon$ within the region where the inequality system is true.  As long as a value of $\epsilon > 0$ is found, the set of inequalities is feasible.

Generally, linear programming implementations work with floating point arithmetic, leading to rounding errors.  Because there is no way to bound how small a feasible region could be, we used the version of GLPK that works using rational arithmetic.  Even still, GLPK returns its solutions as floating point numbers, occasionally with roundoff errors.  Thus, we set the threshold for $\epsilon$ to be near the limits of floating point arithmetic at $5 \times 10^{-15}$.  We found that the smallest $\epsilon$ value above this threshold was on the order of $10^{-3}$, illustrating that any value below $5 \times 10^{-15}$ was due to precision error.

Unfortunately, rational solvers tend to be much slower than their floating point counterparts.  To address this, we needed to optimize our code.  One factor that impacted runtime significantly was the order in which pairs $(i, j)$ were checked in the {\tt for} loop in Line \ref{eqn:forij} of the pseudocode above.  After experimenting with different orderings, we found that checking the pairs in decreasing order of $j - i$ was several times faster than checking the pairs in lexicographic order.

Additionally, the rational solver became stuck in an infinite loop for 26 of the millions of feasibility checks it ran on systems of inequalities for the $\numBlocks = 12$ case.  This issue was resolved by changing the order of the inequalities within these problematic systems of inequalities before they were input into GLPK.  We did not find a single ordering that avoided infinite loops for all of the feasibility checks.  Instead, we found that for any specific set of inequalities, there always existed some ordering where GLPK would halt rapidly.
\end{proof}

\subsection{Optimizing over all BCG diagrams}

Now that we have found the number of possible BCG diagrams for $f(\mathbf{x}_\numBlocks)$ for each $\numBlocks \leq 12$ and $\mathbf{x}_\numBlocks$ that are antisymmetric, we can finally leverage the power of calculus.  Despite being a piecewise function, we soon find that $f(\mathbf{x}_\numBlocks)$ is continuous with continuous partial derivatives.  This implies that its global maximum happens either at a critical point, or at a boundary point of the domain of the function.  In Lemma \ref{lem:fIsC1}, we prove that $f(\mathbf{x}_\numBlocks)$ has continuous partial derivatives for any fixed $\numBlocks$, after which we can finish the proof of Theorem \ref{thm:MainResult}.


\begin{lem} \label{lem:fIsC1}
Consider all block colorings with $\numBlocks$ blocks and endpoints $\mathbf{x} = (x_0, x_1, \ldots, x_\numBlocks)$.  In the region $0 = x_0 < x_1 < \ldots < x_\numBlocks = 1$, $f(\mathbf{x}_\numBlocks)$ is a continuous function with continuous partial derivatives in each variable $x_j$.
\end{lem}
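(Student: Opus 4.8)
The plan is to establish continuity of $f(\mathbf{x}_\numBlocks)$ first, and then upgrade to $C^1$ smoothness by a careful boundary-matching argument across adjacent pieces. Recall from the discussion preceding Lemma \ref{lem:ShapeLemma} that $f(\mathbf{x}_\numBlocks) = \sum_{(i,j,k)} A_{ijk}(\mathbf{x})$, where the sum ranges over triples with matching parities, and each $A_{ijk}$ is the area of the corresponding monochromatic region in the BCG diagram. It therefore suffices to show that each individual region area $A_{ijk}(\mathbf{x})$ is continuous with continuous partial derivatives on the open simplex $\{0 = x_0 < x_1 < \cdots < x_\numBlocks = 1\}$; the finite sum of such functions inherits the property.

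First I would prove continuity. Within the interior of a single piece of the piecewise function — i.e., a region of $\mathbf{x}$-space where the ordering of $\{x_i + x_j\}$ relative to $\{2x_k\}$ is fixed — the area $A_{ijk}$ is given by one of the twenty explicit quadratic polynomials enumerated in Appendix \ref{sec:AllRegions} (two of which appear in Figure \ref{fig:SampleRegions}), so it is manifestly $C^\infty$ there. The only possible failures of continuity occur on the boundaries between pieces, where some pairwise sum $x_i + x_j$ becomes equal to some $2x_k$ (or where two of the four box-corner sums coincide). The geometric picture makes continuity transparent: the monochromatic region is a polygon cut out of a rectangle by two diagonal lines, and as $\mathbf{x}$ crosses such a boundary the polygon degenerates continuously (an edge shrinks to zero length, a triangle collapses, etc.) without any jump in area. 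To make this rigorous I would verify that the relevant pairs of quadratic formulas from Appendix \ref{sec:AllRegions} agree along the shared boundary hyperplane; for instance, the two formulas in Figure \ref{fig:SampleRegions} both reduce to the same expression when $2x_k = x_i + x_{j+1}$. Since only finitely many polygon types and finitely many adjacency relations occur, this is a finite check.

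Next I would prove that the partial derivatives are continuous. The partials are continuous within each piece since the pieces are polynomials, so again the issue is only matching across a boundary hyperplane $H = \{x_i + x_j = 2x_k\}$ (or an analogous degeneracy hyperplane). On each side of $H$ the area is a quadratic; I need the gradients of the two quadratics to agree when restricted to $H$. The cleanest way to see this is geometric: as $\mathbf{x}$ approaches $H$ from either side, the defining polygon approaches a common limiting polygon, and an infinitesimal perturbation $d\mathbf{x}$ tangent to $H$ moves the bounding lines of the polygon in a way that is continuous across $H$, so the first-order change in area $dA_{ijk}$ — which is a line integral of the normal displacement velocity of the boundary against arclength — varies continuously. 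Equivalently, at the degenerate configuration the "extra" triangular cap appearing on one side has area $O(\delta^2)$ where $\delta$ measures the distance past $H$, so it contributes nothing to the first derivative: the one-sided derivatives of $A_{ijk}$ from the two sides match. I would carry this out by differentiating each pair of adjacent quadratic area-formulas from Appendix \ref{sec:AllRegions} and checking equality of gradients on the common boundary; this reduces to a finite number of elementary polynomial identities.

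The main obstacle is bookkeeping rather than conceptual difficulty: one must correctly identify which pairs of the twenty region-types are genuinely adjacent (i.e., which boundary hyperplanes separate which pairs of quadratic formulas), and confirm the $C^0$ and $C^1$ matching on each such boundary, while also handling the degenerate cases where a region becomes empty (area $0$, with zero gradient, which must match the limit of a shrinking polygon). A subtle point worth stating carefully is that a boundary hyperplane may be crossed simultaneously by several different region-boundaries — e.g. the hyperplane $\{x_i + x_j = 2x_k\}$ is relevant to the term $A_{ijk}$ but the same equation with permuted indices may govern other terms — yet since the total $f$ is just the sum, checking term-by-term suffices. I would conclude by noting that, the boundary hyperplanes being a finite union of measure-zero sets and the matching having been verified on each, $f(\mathbf{x}_\numBlocks)$ and each of its partials extend continuously across all of them, establishing the claim on the full open simplex.
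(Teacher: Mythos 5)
Your proposal is correct and takes essentially the same approach as the paper: both decompose $f(\mathbf{x}_\numBlocks)$ region-by-region into the polygon areas $A_{ijk}$ and argue that at a transition between polygon types the configuration degenerates so that the discrepancy is second order, giving matching values and partial derivatives (with the strip-collapse degeneracy relegated to the boundary of the simplex). The paper expresses this by identifying $\frac{\partial}{\partial x_i}f_R(\mathbf{x}_\numBlocks)$ with the total length of the sides of $R$ governed by $x_i$ and observing that an entering or exiting side appears with length zero, while your $O(\delta^2)$-cap argument and proposed finite gradient-matching check of the formulas in Appendix~\ref{sec:AllRegions} verify the same fact.
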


\begin{proof} From the diagram representation in Figure \ref{fig:GrahamDiagram}, it is clear that $f$ is a continuous function of the endpoints, $\mathbf{x}_\numBlocks$.  To verify that the partial derivatives are continuous, we give a geometric argument: consider a single region $R$ in the diagram, like those drawn in Figure \ref{fig:SampleRegions}. Let $f_R(\mathbf{x}_\numBlocks)$ be the area of this single region as a function of the endpoints.  The region has up to $6$ sides, and each side is a line whose position is determined by some single endpoint $x_i$.  Thus, $\frac{\partial}{\partial x_i} f_R(\mathbf{x}_\numBlocks)$ is equal to the total length of the boundaries of $R$ determined by the variable $x_i$. (Indeed, moving a single $x_i$ by a small $\Delta x_i$ changes the area of the polygon $R$ by $\Delta x_i\cdot\ell_i + O(\Delta x_i)^2$ as $\Delta x_i \to 0$, where $\ell_i$ is the total length of the boundaries of $R$ determined by $x_i$.)

Now, we consider several cases.  As $\mathbf{x}_\numBlocks$ varies, $R$ may do any of the following: stay the same type of polygon, change polygon types, or enter or leave the diagram altogether.  It is clear that when $R$ stays the same type of polygon, its side lengths change continuously in $\mathbf{x}_\numBlocks$, so $f_R(\mathbf{x}_\numBlocks)$ has continuous partials in this case.  When $R$ changes polygon type, the change must occur when a diagonal line crosses over a corner of the box formed by the horizontal and vertical strips shown in Figure \ref{fig:SampleRegions}.  This means that any time a region changes polygon type, the side that enters or leaves the region does so with initial length $0$, again implying that the partials are continuous.  Finally, we consider when $R$ enters or leaves the diagram.  There are two ways this can happen: either a horizontal, vertical, or diagonal strip collapses to width $0$, or a diagonal line crosses over the corner of the box described above.  When a strip collapses to width $0$, this means that there are two consecutive endpoints $x_i$ and $x_{i + 1}$ where $(x_{i + 1} - x_i)$ tends to zero.  Thus, although the partial derivative is not continuous in this case, it is on the boundary of the region of $\mathbf{x}_\numBlocks$ values we consider.  On the other hand, when a diagonal line crosses over the corner of a box, all the side lengths of the polygon approach zero, so the partials are again continuous.

The diagram representation of $f(\mathbf{x}_\numBlocks)$ makes it clear that $f(\mathbf{x}_\numBlocks)$ has a bounded number of regions: at most one for each intersection of a horizontal, vertical, and diagonal strip.  Since $f_R(\mathbf{x}_\numBlocks)$ is continuous with continuous partial derivatives for every region $R$, $f(\mathbf{x}_\numBlocks)$ is too.
\end{proof}

Now that we have shown that $f(\mathbf{x}_\numBlocks)$ is continuous with continuous partial derivatives for a fixed $\numBlocks$, we are ready to complete the proof of Theorem \ref{thm:MainResult} with the following lemma.

\begin{lem} \label{lem:fMin}
Let $\mathbf{x}_{12} = (x_0, \ldots, x_{12})$ with $0 \leq x_0 \leq \cdots \leq x_{12} = 1$ and $\mathbf{x}_{12}$ antisymmetric.  The global minimum of $f(\mathbf{x}_{12})$ over all such $\mathbf{x}_{12}$ is $117/548$, occurring uniquely at the coloring from Equation \ref{eq:ParriloColoring}.
\end{lem}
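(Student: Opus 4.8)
The plan is to convert the global minimization into a large but finite family of linear-programming feasibility problems, one per quadratic piece of $f$, and then run the computation. Antisymmetry forces $x_6 = 1/2$ and $x_{12-k} = 1-x_k$, so an antisymmetric coloring with at most $12$ blocks is determined by the tuple $(x_1,x_2,x_3,x_4,x_5)$ in the compact polytope $D := \{0 = x_0 \le x_1 \le x_2 \le x_3 \le x_4 \le x_5 \le 1/2\}$. Because any degeneracy $x_i = x_{i+1}$ is mirrored by $x_{11-i} = x_{12-i}$, the set $D$ partitions by block count into strata $D = \bigsqcup_{\numBlocks} D_\numBlocks^{\circ}$ over even $\numBlocks \le 12$, where $D_\numBlocks^{\circ}$ consists of the colorings with exactly $\numBlocks$ positive-width blocks. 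By Lemma \ref{lem:ShapeLemma}, on each stratum $f$ is piecewise quadratic, the pieces indexed by the orderings of $\{x_i+x_j\}$ against $\{2x_k\}$ counted in Lemma \ref{lem:NumOrderings}; by Lemma \ref{lem:fIsC1}, $f$ restricted to $D_\numBlocks^{\circ}$, in its natural $\numBlocks$-block parametrization, is $C^1$. Since $f$ is continuous on the compact set $D$, its minimum is attained at some $\mathbf{x}^{\ast}$ lying in a unique stratum $D_\numBlocks^{\circ}$.

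Next I would locate $\mathbf{x}^{\ast}$. As a global minimizer over $D$ lying in the relatively open set $D_\numBlocks^{\circ}$ on which $f$ is $C^1$, the point $\mathbf{x}^{\ast}$ satisfies $\nabla f(\mathbf{x}^{\ast}) = 0$. It lies in the closure of at least one quadratic piece $P$, and on $\overline{P}$ the function $f$ equals the single quadratic $q_P$ obtained by summing, over all parity-matched triples $(i,j,k)$, the region-area formulas of Appendix \ref{sec:AllRegions} selected by the ordering that defines $P$; since $\nabla q_P$ extends $\nabla f$ continuously to $\overline{P}$, we get $\nabla q_P(\mathbf{x}^{\ast}) = 0$. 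Thus $\mathbf{x}^{\ast}$ is feasible for the system combining the linear inequalities cutting out $\overline{P}$ with the linear equations $\nabla q_P = 0$ --- a linear feasibility problem, decidable by the same rational-arithmetic GLPK routine used for Lemma \ref{lem:NumOrderings}. Whenever such a feasible set is positive-dimensional, $q_P$ is forced to be constant on it (if $\nabla q_P$ vanishes on an affine set then $q_P$ is constant there), so any certificate returned by the solver records the correct value of $f$; and I need not classify the critical points as minima, maxima, or saddles, since I will take the smallest of all recorded values.

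The proof then finishes with the computation, carried out using the code accompanying Lemma \ref{lem:NumOrderings}: for every even $\numBlocks \le 12$ and each of the $1, 1, 3, 23, 357, 9391, 371{,}219$ pieces, build $q_P$, test feasibility of $\{\nabla q_P = 0\}$ intersected with $\overline{P}$, and, when feasible, evaluate $q_P$ at a returned solution. I would then verify that every value produced is at least $117/548$, that equality holds for exactly one piece whose feasible set is a single point, and that this point is the endpoint vector of Equation \eqref{eq:ParriloColoring}; since $\mathbf{x}^{\ast}$ is among the points produced, this yields both the minimum value $117/548$ and its uniqueness. Equivalently, the stratum bookkeeping can be phrased as an induction on block count: the minimum over $\le \numBlocks$ blocks is either an interior critical point found by the $\numBlocks$-block search or a coloring with fewer blocks.

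The step I expect to be the main obstacle is executing this search reliably rather than the argument itself. There are several hundred thousand feasibility problems, each needing exact rational arithmetic to dodge the precision pitfalls discussed after Lemma \ref{lem:NumOrderings}, and for each piece the quadratic $q_P$ must be assembled without error by choosing, for all $O(\numBlocks^3)$ triples $(i,j,k)$, the correct one of the $20$ polygon types of Appendix \ref{sec:AllRegions}. A secondary subtlety is exhaustiveness: one must be certain that the $C^1$-gluing across piece boundaries (Lemma \ref{lem:fIsC1}) genuinely lets a critical point on a shared face be detected through the quadratic of an adjacent piece, and that the stratification by block count accounts for every degenerate coloring in $D$.
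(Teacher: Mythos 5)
Your proposal is correct and follows essentially the same route as the paper: reduce via antisymmetry and the $C^1$ property (Lemma \ref{lem:fIsC1}) to searching, for each even $\numBlocks\le 12$, the critical points of each quadratic piece enumerated in Lemma \ref{lem:NumOrderings} by solving a rational linear feasibility problem (piece inequalities plus vanishing gradient), using the observation that a positive-dimensional critical set forces the quadratic to be constant on it, and then taking the smallest recorded value, which the computation shows is $117/548$ attained only at the coloring of Equation \eqref{eq:ParriloColoring}. Your handling of the boundary strata (fewer-block colorings) and of critical points on shared faces via closures matches the paper's treatment, and is if anything slightly more explicit about the uniqueness check.
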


\begin{proof}
Because $f(\mathbf{x}_{12})$ is a $C_1$ function on the polytope $0 = x_0 < x_1 < \ldots < x_{12} = 1$, its global minimum occurs on the boundary of the polytope or at a critical point within the interior of the polytope.  The boundary of this polytope is the union of polytopes of the same form with fewer variables.  For this reason, we find the critical points for $f(\mathbf{x}_\numBlocks)$ for each even value of $\numBlocks$ between $0$ and $12$.

Lemma \ref{lem:ShapeLemma} implies that $f(\mathbf{x}_\numBlocks)$ is a piecewise-quadratic function for each $\numBlocks$.  Fix $\numBlocks$, and consider any piece of this function, which can be extended to a function $f^*(\mathbf{x}_\numBlocks)$ on all of $\mathbb{R}^{\numBlocks/2 - 1}$ (since $x_1$ through $x_{\numBlocks/2 - 1}$ determine the coloring because it is anti-symmetric).  The partial derivatives of $f^*(\mathbf{x}_\numBlocks)$ are piecewise linear functions.  The critical points of this everywhere-defined quadratic are the solution to a linear system of equations.  Therefore, there are either no critical points, or a vector space of critical points.  In the case that the vector space has positive dimension, the value of $f^*(\mathbf{x}_\numBlocks)$ must be constant among all of its critical points.  Thus, when $f^*(\mathbf{x}_\numBlocks)$ has critical points, it suffices to check the value of $f^*(\mathbf{x}_\numBlocks)$ at a single critical point when checking for the values of local optima.

This leads us to the following pseudocode to search for the global minimum of $f(\mathbf{x}_{12})$ on the polytope $0 = x_0 \leq x_1 \leq \cdots \leq x_{12} = 1$.  (The full version of the code is posted at \url{https://cocalc.com/TorinGreenwood/MonochromeSequences/MonochromaticProgressions}.)\\\




\begin{Verbatim}[numbers = left,  firstnumber=last, framesep=9pt, vspace=0pt, frame = topline, commandchars=&\{\}, codes={\catcode`$=3\catcode`_=8\catcode`^=7}, label ={Pseudocode to find the minimum value of $f(\mathbf{x}_{12})$}]
&textcolor{blue}{\\ We search the interior of $f(&mathbf{x}_&numBlocks)$ for $&numBlocks=2,4,6,8,10,$ and $12$.}
>> for $&numBlocks$ from 0 to 12 by twos:

   >> for each piece $f^*(&mathbf{x}_&numBlocks)$ of the piecewise function $f(&mathbf{x}_&numBlocks)$
      (identified by Lemma &ref{lem:NumOrderings}):
      
         >> calculate the quadratic polynomial corresponding to
            $f^*(&mathbf{x}_&numBlocks)$ (by using Lemma &ref{lem:ShapeLemma})
         
         &textcolor{blue}{\\In the next line, we can feed into GLPK all of the}
         &textcolor{blue}{inequalities defining the configuration for $f^*(&mathbf{x}_&numBlocks)$ plus}
         &textcolor{blue}{the equalities that set each of the partial derivatives}
         &textcolor{blue}{of $f^*(&mathbf{x}_&numBlocks)$ to zero.}
         >> use GLPK to check the existence of a critical point
            of $f^*(&mathbf{x}_&numBlocks)$ within the region of $&mathbf{x}_&numBlocks$-values where
            $f(&mathbf{x}_&numBlocks)&equiv{f^*(&mathbf{x}_&numBlocks)}$
         
         >> if critical points exist:
            >> evaluate $f^*(&mathbf{x}_&numBlocks)$ at any critical point $&mathbf{c}_&numBlocks$
            >> store $&mathbf{c}_&numBlocks$ and $f^*(&mathbf{c}_&numBlocks)$ if this is a new record minimum
            
\end{Verbatim}
\begin{Verbatim}[numbers = left, firstnumber=last, vspace=0pt, frame = bottomline, commandchars=&\{\}, codes={\catcode`$=3\catcode`_=8\catcode`^=7}]
>> return the minimum $&mathbf{c}_&numBlocks$ and $f^*(&mathbf{c}_&numBlocks)$ values
\end{Verbatim}

This code verifies that the global minimum of $f(\mathbf{x}_\numBlocks)$ when $\numBlocks$ is at most $12$ is $117/548$, which is attained only at the coloring with endpoints given in Equation \eqref{eq:ParriloColoring} (without the $\intLength$ in each coordinate).
\end{proof}

The number of pieces of the function $f(\mathbf{x}_\numBlocks)$ for $\numBlocks \geq 14$ grows very rapidly, making an analysis of its critical points increasingly challenging.  However, we can guarantee that the optimal is always rational:

\begin{cor}\label{cor:RatEndpoints} For each $\numBlocks \in \mathbb{Z}^+$, the minimum value of $f(\mathbf{x}_\numBlocks)$ is rational, regardless of whether $\mathbf{x}_\numBlocks$ is restricted to be anti-symmetric or not.
\end{cor}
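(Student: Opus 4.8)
The plan is to show that the minimum of $f(\mathbf{x}_\numBlocks)$ over the closed polytope $0 = x_0 \leq x_1 \leq \cdots \leq x_\numBlocks = 1$ is always attained at a point with rational coordinates and has rational value, using the same structure as the proof of Lemma \ref{lem:fMin} but without the antisymmetry restriction. First I would note that $f(\mathbf{x}_\numBlocks)$ is continuous on a compact set by Lemma \ref{lem:fIsC1}, so the minimum exists. The key structural facts are: the domain polytope has vertices with rational (indeed integer-over-something) coordinates; each of the finitely many pieces of $f$ (indexed by orderings of the pairwise sums $\{x_i+x_j\}$, as in Lemma \ref{lem:ShapeLemma}) is a quadratic polynomial with rational coefficients; and the region on which a given piece is active is a polytope cut out by linear inequalities with rational (in fact integer) coefficients.

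The argument then proceeds by cases on where the minimum occurs. The minimum on the whole polytope is the smallest among the minima of each piece $f^*$ over its (closed) active region $P^*$, and each such region is itself a rational polytope. So it suffices to show: the minimum of a rational quadratic $q$ over a rational polytope $P$ is rational and attained at a rational point. I would prove this by induction on the dimension of $P$. If the minimum of $q$ over $P$ is attained at an interior point of $P$, then that point is a critical point of $q$, hence a solution to a rational linear system $\nabla q = 0$; the solution set is a rational affine subspace, and intersecting it with (the affine hull of) $P$ — or simply taking any rational point of that affine subspace lying in $P$, which exists since the system and $P$ are rational and the intersection is nonempty — gives a rational minimizer, and $q$ evaluated there is rational. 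If the minimum is not attained in the interior, it is attained on the boundary $\partial P$, which is a finite union of rational polytopes of strictly smaller dimension, and we apply the inductive hypothesis. The base case of a single point (a vertex) is trivial since polytope vertices are rational.

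Concretely, combining these: for each of the finitely many pieces we get a rational candidate value, and the overall minimum of $f(\mathbf{x}_\numBlocks)$ is the minimum of these finitely many rational numbers, hence rational. The same reasoning works verbatim whether or not we impose the linear antisymmetry constraints $x_k + x_{\numBlocks - k} = 1$, since those are rational linear equalities and merely replace the ambient polytope by a rational sub-polytope; intersecting all the active regions and critical-point affine subspaces with this rational subspace preserves rationality throughout.

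The main obstacle — really the only subtlety — is the step handling a minimum attained in the relative interior of a piece's active region: one must be careful that ``$q$ has a critical point in $P$'' does not by itself force that critical point to be rational when the critical locus is a positive-dimensional affine subspace, but this is resolved exactly as in Lemma \ref{lem:fMin}: on such an affine subspace $q$ is constant, so its value there is determined by any single point, and a rational point of the subspace is guaranteed because the subspace is defined by a rational linear system and meets the rational polytope $P$ (one can exhibit one by solving the combined rational linear system for a vertex of the rational polytope $P \cap \{\nabla q = 0\}$). I would also remark that the finiteness of the number of pieces for each fixed $\numBlocks$ is immediate from the diagram representation, since there are only finitely many orderings of the finite set $\{x_i + x_j\}$, so no new enumeration (beyond what Lemma \ref{lem:NumOrderings} already conceptually provides) is needed for the corollary.
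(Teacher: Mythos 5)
Your proposal is correct and follows essentially the same route as the paper: reduce to the fact that each piece of $f(\mathbf{x}_\numBlocks)$ is a quadratic with rational coefficients on a rational polyhedral region, observe that an interior minimizer is a critical point cut out by a rational linear system on whose solution set the quadratic is constant (so a rational critical point gives the same, rational, value), and handle boundary minimizers recursively. The only cosmetic difference is that you organize the recursion as an induction on the dimension of each piece's active region, whereas the paper descends through the block-coloring polytopes $f(\mathbf{x}_\ell)$ for $\ell \leq \numBlocks$; the underlying argument is the same.
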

\begin{proof} 
This is nearly immediate from our proof structure: the minimum of $f(\mathbf{x}_\numBlocks)$ occurs at some critical point of $f(\mathbf{x}_\ell)$ with $\mathbf{x}_\ell$ in the interior of where $f(\mathbf{x}_\ell)$ is defined, for an $\ell \leq \numBlocks$.  These critical points are defined by a system of linear equations with rational coefficients.  Whenever there are only finitely many critical points, they all must have rational coordinates.  On the other hand, if there is a piece $f^*(\mathbf{x}_\ell)$ of the piecewise function $f(\mathbf{x}_\ell)$ that has infinitely many critical points, all of the critical points of $f^*(\mathbf{x}_\ell)$ attain the same constant value.  This implies that there still exists a critical point with rational coordinates where the minimum is attained.  Finally, because each piece of $f(\mathbf{x}_\ell)$ is a quadratic with rational coefficients, the minimum is thus also rational.
\end{proof}

\section{Circle colorings}\label{sec:circColor}
As a variation on the theme of enumerating monochromatic progressions within colorings of $[\intLength]$, some authors have also investigated properties of arithmetic progressions within colorings of the cyclic group $\mathbb{Z}_\intLength$.  For example, given a fixed red and blue $2$-coloring of $\mathbb{Z}_p$ for $p$ prime, the fraction of monochromatic $3$-term progressions that are red or blue depends only on the proportion of elements colored red, and not on the exact positioning of the red and blue elements, \cite{Datskovsky:2003, Lu:2012}. Even when $\intLength$ is not prime, the fraction of monochromatic $3$-term progressions in $\mathbb{Z}_\intLength$ is bounded below by the quantity given if $\intLength$ were prime, \cite{Lu:2012}.

Inspired by these results, we now explore a continuous analogue to the enumeration of monochromatic progressions within $2$-colorings of $\mathbb{Z}_\intLength$. Color each of the numbers in the unit circle $S_1 = \{e^{2\pi i \theta}: \theta \in [0,1) \}$ with red or blue, and consider $3$-term arithmetic progressions of the form $(e^{2\pi i x_1}$, $e^{2\pi i (x_1+d)}$, $e^{2\pi i (x_1+2d)})$ for $x_1, d \in [0,1)$. To properly discuss the ``fraction'' of these that are monochromatic for a given coloring, we introduce the uniform probability measure $\mu$ on $[0,1)$ and randomly sample arithmetic progressions by independently choosing $x_1,d \in [0,1)$ according to $\mu$. Using this framework, the probability of selecting a monochromatic progression depends only on the Lebesgue measure of the set of points colored red (i.e.\ the likelihood that, say, $e^{2\pi i x_1}$ is red) and not on which points were colored red, which is an analogous result to the one for $2$-colorings of the discrete group $\mathbb{Z}_p$.

\begin{lem} \label{Lem:CircleIndependence}
Let $C:S_1 \to \{0,1\}$ be any measurable coloring of $S_1$ with
\[
p:=\mu\left(\left\{\theta \in [0,1): C\left(e^{2\pi i \theta}\right) = 0\right\}\right)
\]
defined as the proportion of points colored red, and let $m(C)$ be the set containing all pairs $(x_1, d) \in [0,1)\times [0,1)$ such that $(e^{2\pi i x_1}$, $e^{2\pi i (x_1+d)}$, $e^{2\pi i (x_1+2d)})$ are monochromatic. Then,
\[
(\mu\times \mu)(m(C)) = 1 - 3p + 3p^2.
\]
In particular, if we randomly select a starting point $x_1$ and an increment $d$ independently from each other according to the uniform distribution on $S_1$, then the probability that the associated $3$-AP is monochrome depends only on the proportion $p$ of red points and not on how these points are distributed around $S_1$.
\end{lem}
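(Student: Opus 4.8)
The plan is to compute $(\mu\times\mu)(m(C))$ directly using indicator functions and Fourier analysis on the circle. Write $r(\theta)$ for the indicator that $e^{2\pi i\theta}$ is red, so that $\mu(\{r=1\})=p$, and let $b(\theta)=1-r(\theta)$ be the indicator of blue. A progression $(x_1,d)$ is monochromatic precisely when all three of $x_1,\ x_1+d,\ x_1+2d$ are red, or all three are blue; since these are mutually exclusive, the integrand is $r(x_1)r(x_1+d)r(x_1+2d)+b(x_1)b(x_1+d)b(x_1+2d)$, and
\[
(\mu\times\mu)(m(C)) = \int_0^1\!\!\int_0^1 \Big(r(x_1)r(x_1+d)r(x_1+2d)+b(x_1)b(x_1+d)b(x_1+2d)\Big)\,dx_1\,dd.
\]
The first step is to substitute $b=1-r$ and expand the $b$-term; the cubic and quadratic terms in $r$ will need to be shown to cancel against pieces of the $r^3$ integral, leaving only terms that are integrals of $r$ raised to powers $0$, $1$, and $2$.

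The second step, which is the crux, is to evaluate the ``mixed'' integrals such as $\int\!\!\int r(x_1)r(x_1+d)\,dx_1\,dd$ and $\int\!\!\int r(x_1)r(x_1+2d)\,dx_1\,dd$ and $\int\!\!\int r(x_1+d)r(x_1+2d)\,dx_1\,dd$, and to show the genuinely cubic integral $\int\!\!\int r(x_1)r(x_1+d)r(x_1+2d)\,dx_1\,dd$ does not in fact depend on the distribution of red points. Here I would expand $r$ in a Fourier series $r(\theta)=\sum_{n\in\mathbb{Z}}\hat r(n)e^{2\pi i n\theta}$ with $\hat r(0)=p$. For any pairwise product, integrating over $d\in[0,1)$ forces a linear relation among the frequencies (e.g. for $r(x_1+d)r(x_1+2d)$ the $d$-integral kills all terms except those with $n_1+2n_2=0$), and then integrating over $x_1$ forces $n_1+n_2=0$; the two relations together force $n_1=n_2=0$, so each pairwise integral equals $\hat r(0)^2=p^2$. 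For the triple product, the $d$-integral forces $n_1+2n_2+\cdots$-type relations and the $x_1$-integral forces $\sum n_i=0$; the key observation is that because the coefficient vector $(1,1,1)$ of $x_1$ and $(0,1,2)$ of $d$ are such that the only integer solution with $n_1+n_2+n_3=0$ and $n_2+2n_3=0$ is $n_1=n_2=n_3=0$ — this is exactly the arithmetic fact that $1,2$ are ``independent enough'' — the triple integral collapses to $\hat r(0)^3=p^3$. (Equivalently, one can phrase this via the substitution $(x_1,x_3)=(x_1,x_1+2d)$ sending the AP to its endpoints and noting the middle term $x_2=(x_1+x_3)/2$, then Fourier-expanding in $x_1,x_3$.)

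The third step is bookkeeping: plug $\int r^3$-type $=p^3$, the three pairwise integrals $=p^2$, $\int r=p$, and $\int 1=1$ into the expansion of the $r$-term and the $b=(1-r)$-term. The red contribution is $p^3$; the blue contribution, after expanding $(1-r)(1-r)(1-r)$ and integrating term by term, is $1-3p+3p^2-p^3$; summing gives $1-3p+3p^2$, as claimed. I expect the main obstacle to be the rigor of the cubic step — specifically, justifying termwise integration of the Fourier series (Fubini/Parseval for the $L^2$ function $r$ suffices, since $r$ is bounded hence in every $L^q$) and correctly verifying that the linear system on the frequencies has only the trivial solution, which is where the ``$3$-term AP'' structure (as opposed to a general linear pattern) is essential. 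Everything else is routine algebra.
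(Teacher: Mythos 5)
Your crux step is false as stated. For the triple integral the frequency constraints coming from the $x_1$- and $d$-integrations are $n_1+n_2+n_3=0$ and $n_2+2n_3=0$, and this system does \emph{not} force $n_1=n_2=n_3=0$: it has the one-parameter family of integer solutions $(n_1,n_2,n_3)=(n,-2n,n)$, $n\in\mathbb{Z}$. Hence
$\int_0^1\!\int_0^1 r(x_1)r(x_1+d)r(x_1+2d)\,dx_1\,dd=\sum_{n\in\mathbb{Z}}\hat r(n)^2\,\hat r(-2n)$,
which genuinely depends on the coloring and is not $p^3$ in general (e.g.\ for $r=\mathbf{1}_{[0,1/3)}$ the $n=\pm 1$ terms already contribute $3\sqrt{3}/(8\pi^3)\neq 0$ beyond $p^3$; more conceptually, if the $3$-AP density of a single color class were determined by its measure alone, the whole subject of this paper would trivialize). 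This is exactly the familiar fact that one linear form in $(x_1,d)$ can be eliminated but the $3$-AP pattern leaves a nontrivial resonance $\hat r(n)^2\hat r(-2n)$. So the ``key observation'' in your second step is an error, not just a gap in rigor.

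The lemma itself is still within reach of your setup, because the cubic terms cancel before they ever need to be evaluated: writing $T_r$ and $T_b$ for the red and blue triple integrals and expanding $b=1-r$, your own singleton and pairwise evaluations (which are correct -- each of the three pairwise frequency systems has only the trivial solution, so each pairwise integral is $p^2$, and each marginal is $p$) give $T_b=1-3p+3p^2-T_r$, whence $T_r+T_b=1-3p+3p^2$ with no knowledge of $T_r$ required. The fix is therefore to delete the claimed evaluation $T_r=p^3$ and let $T_r$ cancel. Once repaired, your argument is essentially a Fourier-analytic transcription of the paper's proof, which phrases the same cancellation probabilistically: the three terms of the progression are pairwise independent and uniform by rotational invariance, and inclusion--exclusion on the events ``term $i$ is red'' is arranged (via $\mathbb{P}(A_1\cup A_2\cup A_3)=1-\mathbb{P}(B_1\cap B_2\cap B_3)$) precisely so that the triple intersection never has to be computed.
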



\begin{proof}
We take a probabilistic approach that follows the proof structure of Theorem 6 from \cite{Lu:2012}.  To that end, let $x_1$ and $d$ be independent draws from $\mu$ and for $i=1,2,3$ let $A_i$ (respectively, $B_i$) be the event that the $i$th term in the progression $(e^{2\pi i x_1}$, $e^{2\pi i (x_1+d)}$, $e^{2\pi i (x_1+2d)})$ is red (respectively, blue). Then, via inclusion/exclusion, we have
\[
\mathbb{P}(A_1 \cup A_2 \cup A_3)=\left(\sum_{i = 1}^3 \mathbb{P}(A_i)\right) - \left(\sum_{1 \leq i < j \leq 3} \mathbb{P}(A_i \cap A_j)\right) + \mathbb{P}(A_1 \cap A_2 \cap A_3).
\]
We note that $\mathbb{P}(A_1 \cup A_2 \cup A_3) = 1 - \mathbb{P}(B_1 \cap B_2 \cap B_3)$.  Since $\mathbb{P}(m(C)) = \mathbb{P}(A_1 \cap A_2 \cap A_3) + \mathbb{P}(B_1 \cap B_2 \cap B_3)$, we can rearrange the above to obtain
\begin{equation}\label{eqn:circProb}
\mathbb{P}(m(C)) = 1 - \sum_{i=1}^3 \mathbb{P}(A_i) + \sum_{1 \leq i < j \leq 3} \mathbb{P}(A_i \cap A_j).
\end{equation}
The random variables $e^{2\pi i x_1}$, $e^{2\pi i (x_1+d)}$, and $e^{2\pi i (x_1+2d)}$ are pairwise independent and uniformly distributed on $S_1$. This is true based on the rotational invariance of the uniform distribution on $S_1$ and the fact that $e^{2\pi i x_1}$ and $e^{2\pi i d}$ are independent and uniformly distributed on $S_1$.  It follows that $\mathbb{P}(A_i) = p$ and $\mathbb{P}(A_i \cap A_j) = p^2$ for $i,j = 1,2,3$.  Substituting these into \eqref{eqn:circProb} yields
%
%
%
\[
(\mu \times \mu)(m(C)) = \mathbb{P}(m(C)) = 1 - 3p + 3p^2.
\]
\end{proof}

\section{Future Work}
In Sections \ref{sec:discCont} and \ref{sec:contProofs} above, we outlined an approach to identifying the optimal coloring of $[\intLength]$ and the interval $[0, 1]$ that minimizes the fraction of monochromatic $3$-APs for any fixed upper bound on the number of blocks $\numBlocks$.  A natural question is whether we can show that the optimal coloring for any $\numBlocks > 12$ is the same as the optimal coloring for $\numBlocks = 12$.  One possibility is to prove that the colorings are no better for $\numBlocks = 14$, and then argue that adding arbitrarily more intervals is no better than adding just two more intervals.

Besides investigating how colorings of $[\intLength]$, $\mathbb{Z}_\intLength$, $[0,1]$, and $S_1$ affect the prevalence of monochromatic arithmetic progressions of length 3, there are other related problems that have yet to be explored. Perhaps the most natural question to ask is how the analysis changes if we consider longer arithmetic progressions, and the articles \cite{Wolf:2010, Butler:2010, Lu:2012, Butler:2014} make  partial progress in this direction for several different lengths of progressions.  A slightly less obvious question is to ask what happens when we consider arithmetic progressions of color-dependent lengths.  For example, we could attempt to color $[0,1]$ or $[\intLength]$ in a way that simultaneously minimizes the fractions of monochromatic blue progressions of length 3 and monochromatic red progressions of length 4. 

Another natural generalization is to add more colors.  What do the $3$- and $4$-colorings of $[0,1]$ that minimize monochrome arithmetic progressions of length 3 look like?  Can anything be said about the rate at which the fraction of monochrome progressions decays as the number of colors increases? All of these questions have natural analogues in the setting of Ramsey theory as applied to graphs, and of course these generalizations might interact in any number of ways. 

When the problems studied in this paper were first posed, it was unclear whether or not colorings could perform better than random.  Although they can perform better than random in the cases we present in detail above, is this also true for related problems?  Recent work in \cite{Costello:2021} gives interesting insights into some classes of problems where solutions must be better than random.

In addition to changing the number of colors or length of the progressions we study, we could also consider colorings in other geometries.  For example, we wonder how to color an interval that has a gap in the middle in order to minimize monochromatic APs therein. By varying the length of the gap, we might gain insight into why antisymmetry is seemingly important in the optimal block colorings of $[0,1]$ that we discuss above.  Furthermore, we have already seen that in the contexts of $\mathbb{Z}_p$ for $p$ prime and the continuous circle, the performance of colorings with respect to $3$-term progressions depends only on the ratios of the colors present.  What other algebraic and geometric settings exhibit similar behavior? Alternatively, what would happen if we were to consider $S_1$ as in Section \ref{sec:circColor} but sample $3$-APs by choosing the start point and increment according to a different distribution than uniform?

\section{Acknowledgments}

Computations were performed using High Performance Computing infrastructure provided by the Mathematical Sciences Support unit at the University of the Witwatersrand, and for this the authors are thankful.
%
%
Additionally, the authors are grateful for invaluable tips from Professor Antti Laaksonen on how to optimize the code in Lemma \ref{lem:NumOrderings}.

\printbibliography

\appendix

\section{Appendix: 20 Polygonal Regions} \label{sec:AllRegions}
The 20 possible regions from Lemma \ref{lem:ShapeLemma} are given below.\\

\newlength{\figBoxWidth}
\setlength{\figBoxWidth}{.19\linewidth}
\newlength{\figHeight}
\setlength{\figHeight}{.19\linewidth}

\setcounter{FigNum}{1}

\newcommand{\polyRegBox}[3]{%
    \begin{minipage}{1.5em}\flushright\textbf{\arabic{FigNum}}\addtocounter{FigNum}{1}\end{minipage}\hspace{3pt}
    \begin{minipage}{\linewidth}
    \framebox[1.02\width]{\begin{minipage}[c]{\figBoxWidth}
    \vspace{0pt}
    \includegraphics[height = \figHeight]{#1}
    \end{minipage}
    \begin{minipage}[c]{.745\linewidth}
    \vspace{0pt}
    $\begin{aligned}
    &\text{\textbf{Characterizing Inequalities:}}\\
    &\quad #2\\
    &\text{\textbf{Region Area:}}\\
    &\quad #3
    \end{aligned}$
    \end{minipage}}
\end{minipage}\vskip6pt
}


{%
\parindent0pt
\polyRegBox{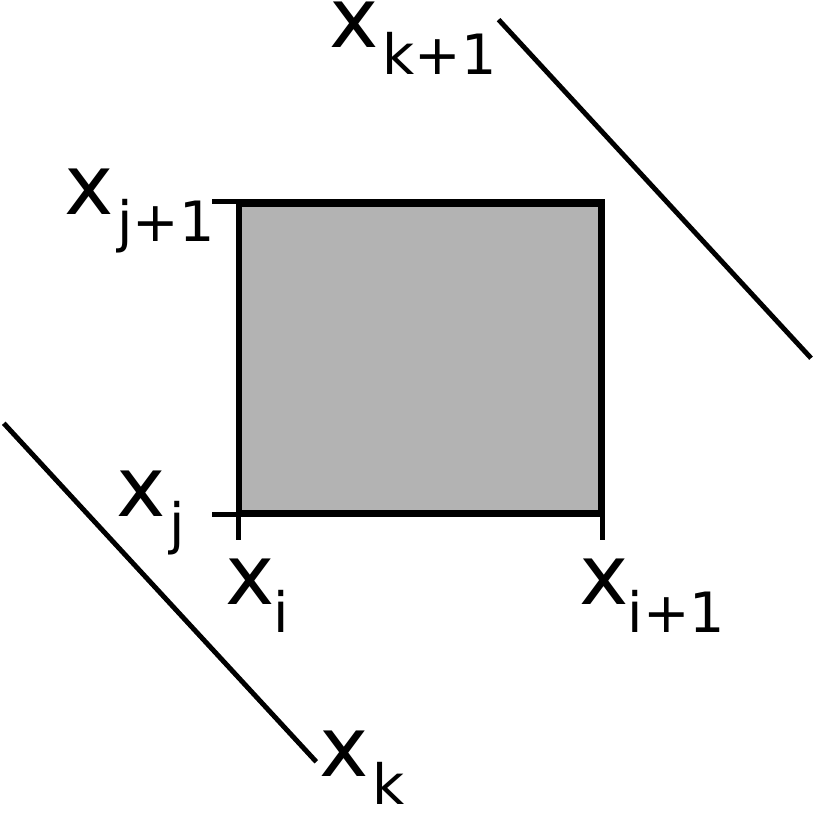}{2x_{k} \leq x_{i} + x_{j} \leq x_{i+1} + x_{j+1} \leq 2x_{k+1}}{(x_{i+1} - x_{i})(x_{j+1} - x_{j})}
\polyRegBox{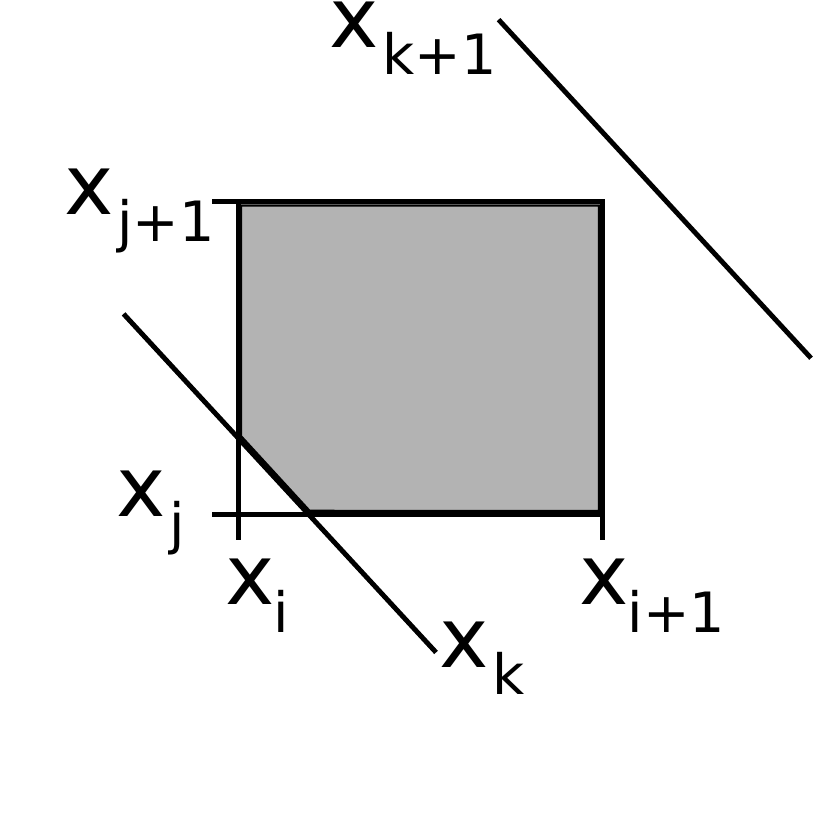}{x_{i} + x_{j} \leq 2x_{k} \leq \min(x_{i} + x_{j+1}, x_{i+1} + x_{j}),\\&\quad x_{i+1} + x_{j+1} \leq 2x_{k+1}}{(x_{i+1} - x_{i})(x_{j+1} - x_{j}) - (2x_{k} - x_{i} - x_{j})^2/2}
\polyRegBox{PermutationCases/Case3.pdf}{x_{i+1} + x_{j} \leq 2x_{k} \leq x_{i} + x_{j+1} \leq x_{i+1} + x_{j+1} \leq 2x_{k+1}}{(x_{i+1} - x_{i}) ( x_{j+1} + x_{i}/2 + x_{i+1}/2  - 2x_{k})}
\polyRegBox{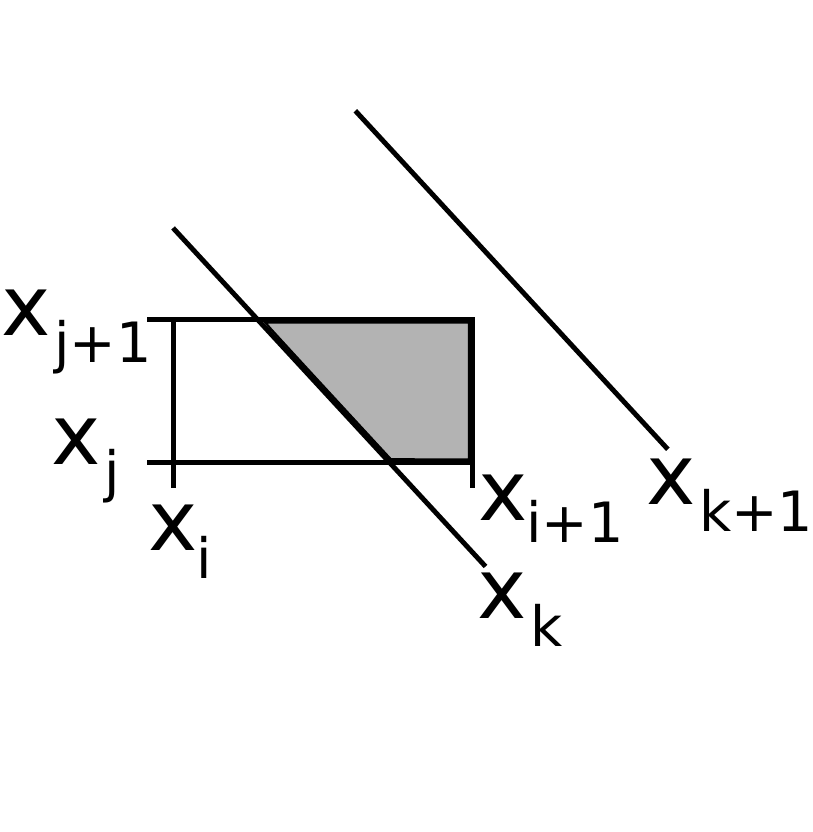}{x_{i} + x_{j+1} \leq 2x_{k} \leq x_{i+1} + x_{j} \leq x_{i+1} + x_{j+1} \leq 2x_{k+1}}{(x_{j+1} - x_{j}) (x_{i+1} + x_{j}/2 + x_{j+1}/2 - 2x_{k})}
\polyRegBox{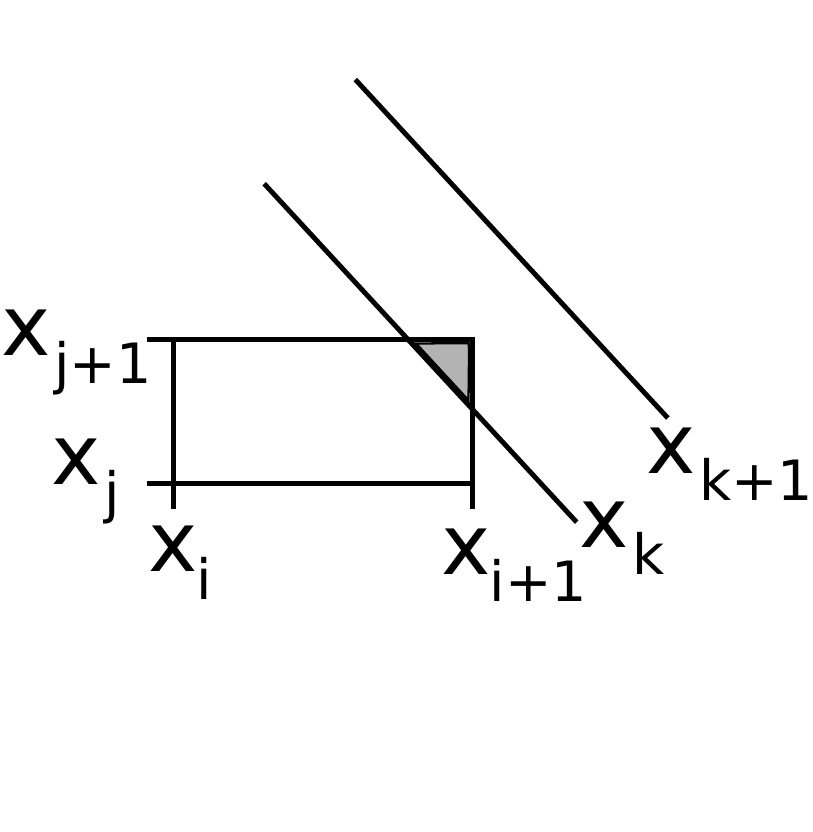}{\max(x_{i} + x_{j+1}, x_{i+1} + x_{j}) \leq 2x_{k} \leq x_{i+1} + x_{j+1} \leq 2x_{k+1}}{
(2x_{k} - x_{i+1} - x_{j+1})^2/2}
\polyRegBox{PermutationCases/Case6.pdf}{2x_{k} \leq x_{i} + x_{j},\\ &\quad  \max(x_{i} + x_{j+1}, x_{i+1} + x_{j}) \leq 2x_{k+1} \leq x_{i+1} + x_{j+1}}{(x_{i+1} - x_{i})(x_{j+1} - x_{j}) - (2x_{k+1} - x_{j+1} - x_{i+1})^2/2}
\polyRegBox{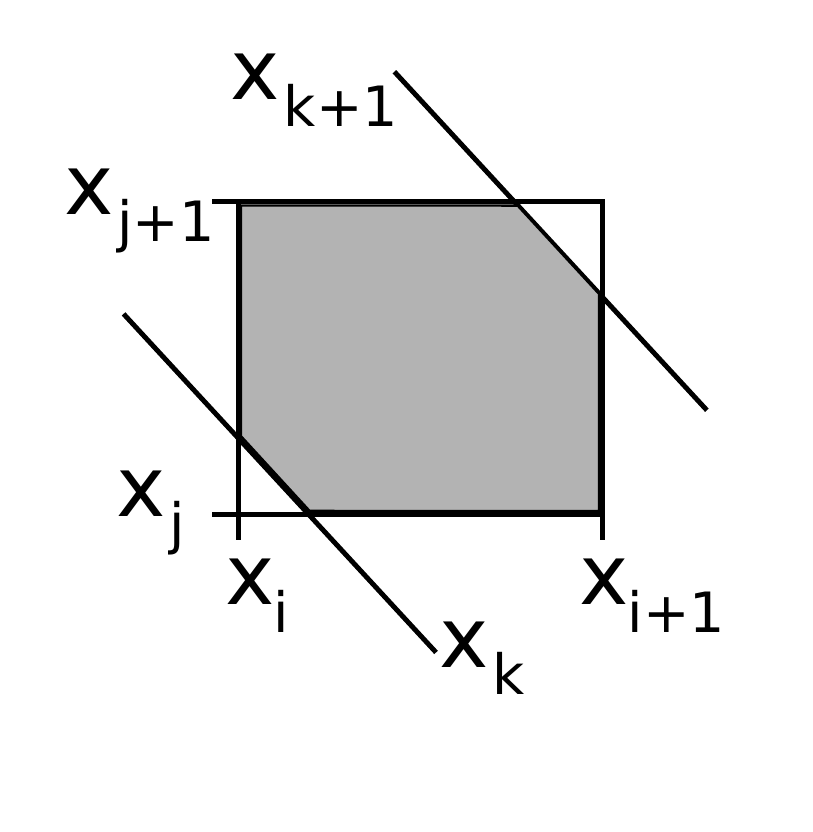}{x_{i} + x_{j} \leq 2x_{k} \leq \min(x_{i} + x_{j+1}, x_{i+1} + x_{j}),\\&\quad \max(x_{i} + x_{j+1}, x_{i+1} + x_{j}) \leq 2x_{k+1} \leq x_{i+1} + x_{j+1}}{(x_{i+1} - x_{i})(x_{j+1} - x_{j}) - (2x_{k} - x_{i} - x_{j})^2/2 \\&\quad-  (2x_{k+1} - x_{i+1} - x_{j+1})^2/2}
\polyRegBox{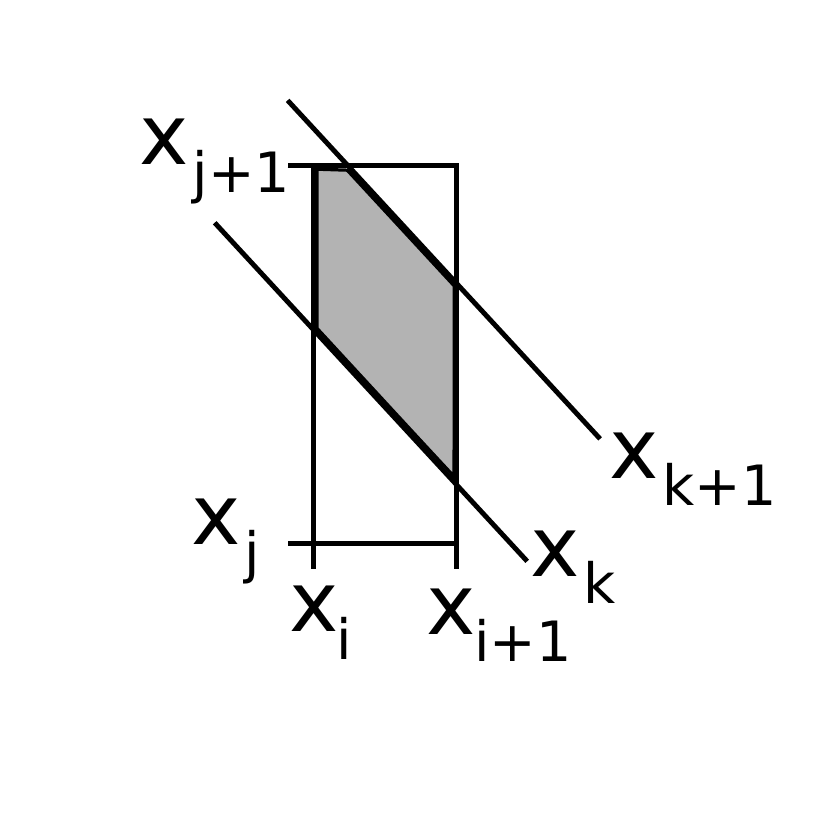}{x_{i+1} + x_{j} \leq 2x_{k} \leq x_{i} + x_{j+1} \leq 2x_{k+1} \leq x_{i+1} + x_{j+1}}{(x_{i_2} - x_{i_1}) (x_{j_2} + x_{i_1}/2 + x_{i_2}/2 - 2x_{k_1}) \\&\quad - (2x_{k_2} - x_{i_2} - x_{j_2})^2/2 }
\polyRegBox{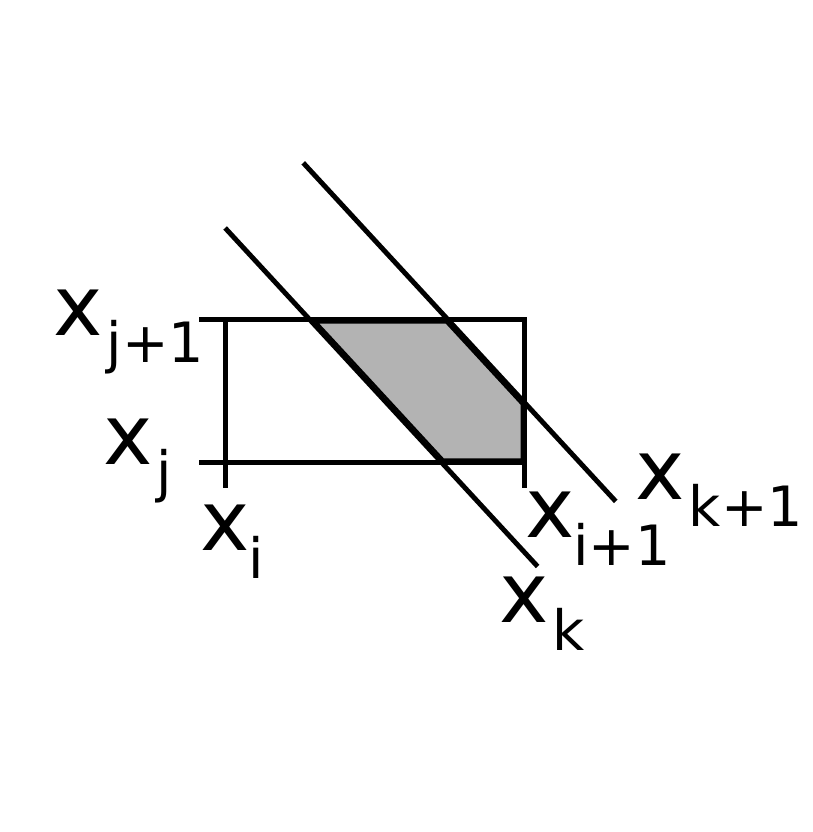}{x_{i} + x_{j+1} \leq 2x_{k} \leq x_{i+1} + x_{j} \leq 2x_{k+1} \leq x_{i+1} + x_{j+1}}{(x_{j+1} - x_{j}) \cdot(x_{i+1} + x_{j}/2 + x_{j+1}/2 - 2x_{k}) \\&\quad- (2x_{k+1} - x_{i+1} - x_{j+1})^2/2}
\polyRegBox{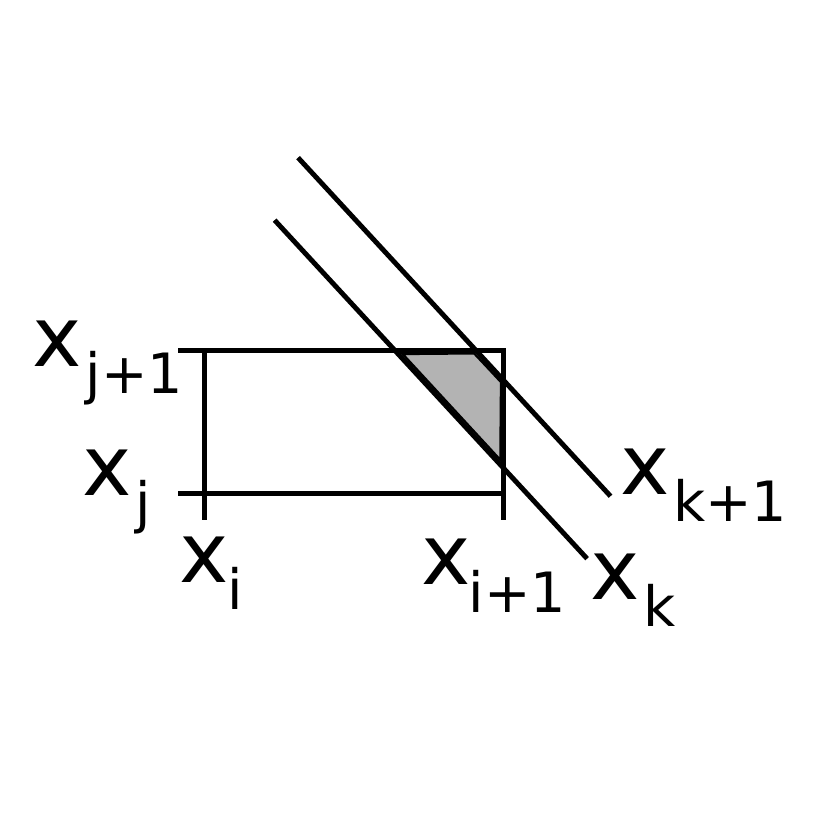}{\max(x_{i} + x_{j+1}, x_{i+1} + x_{j}) \leq 2x_{k} \leq 2x_{k+1} \leq x_{i+1} + x_{j+1}}{(2x_{k} - x_{i+1} - x_{j+1})^2/2 - (2x_{k+1} - x_{i+1} - x_{j+1})^2/2}
\polyRegBox{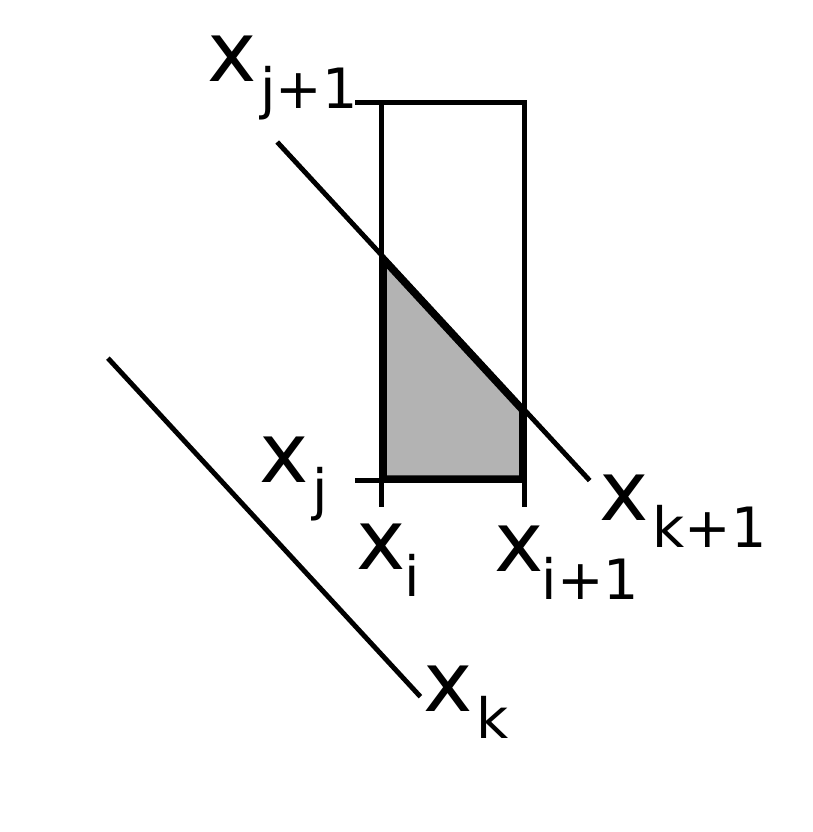}{2x_{k} \leq x_{i} + x_{j} \leq x_{i+1} + x_{j} \leq 2x_{k+1} \leq x_{i} + x_{j+1}}{(x_{i+1} - x_{i})(2x_{k+1} - x_{j} - x_{i}/2 - x_{i+1}/2)}
\polyRegBox{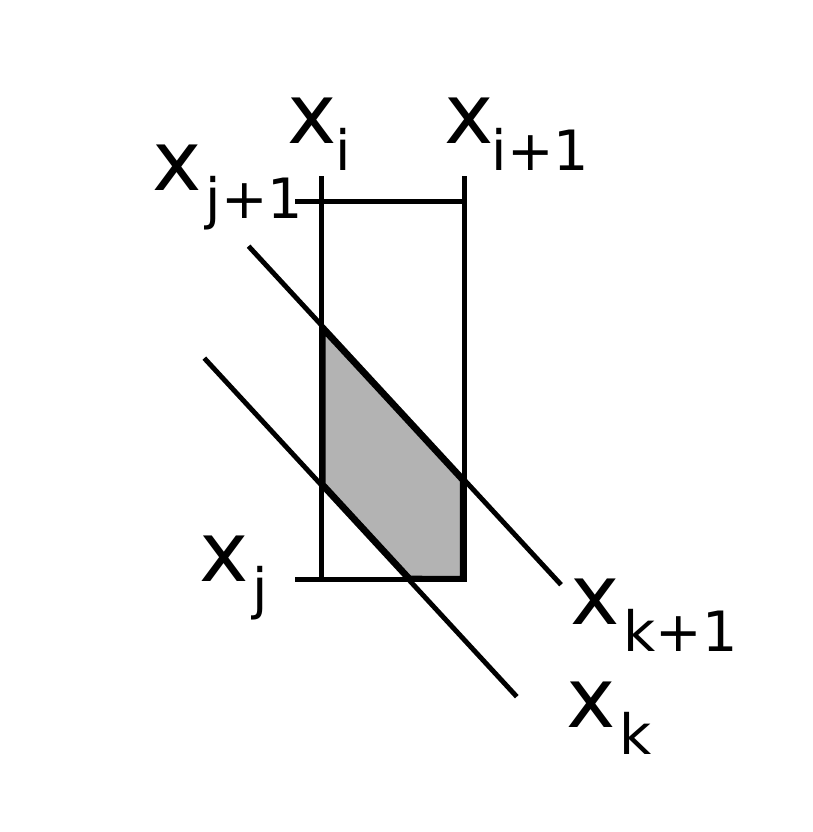}{x_{i} + x_{j} \leq 2x_{k} \leq x_{i+1} + x_{j} \leq 2x_{k+1} \leq x_{i} + x_{j+1}}{(x_{i+1} - x_{i})(2x_{k+1} - x_{j} - x_{i}/2 - x_{i+1}/2) \\&\quad- (2x_{k} - x_{i} - x_{j})^2/2}
\polyRegBox{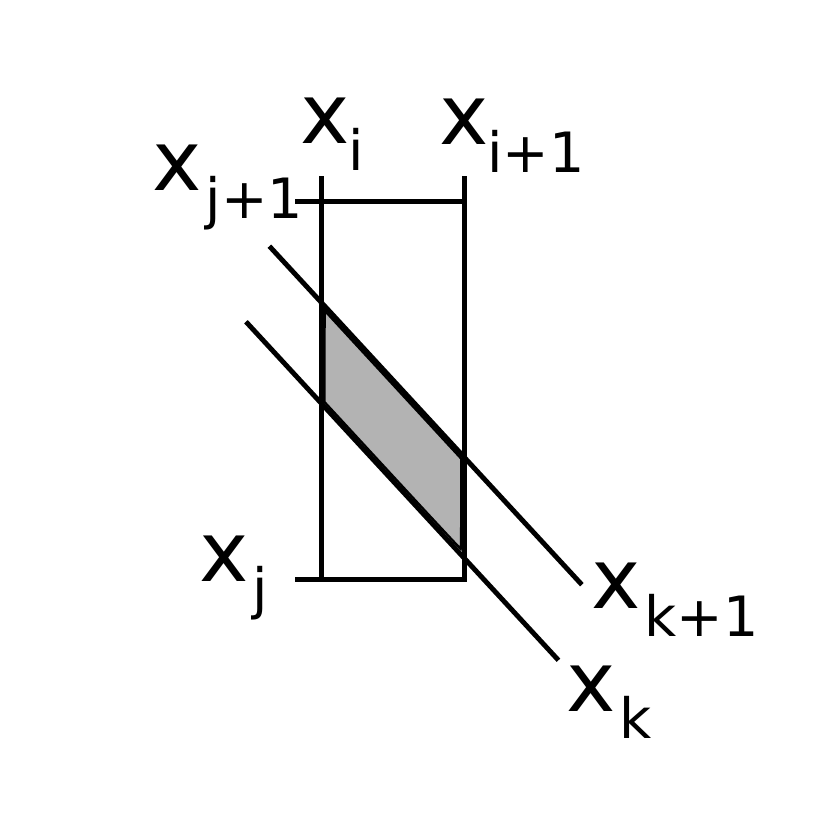}{x_{i+1} + x_{j} \leq 2x_{k} \leq 2x_{k+1} \leq x_{i} + x_{j+1}}{(x_{i+1} - x_{i})(2x_{k+1} - 2x_{k})}
\polyRegBox{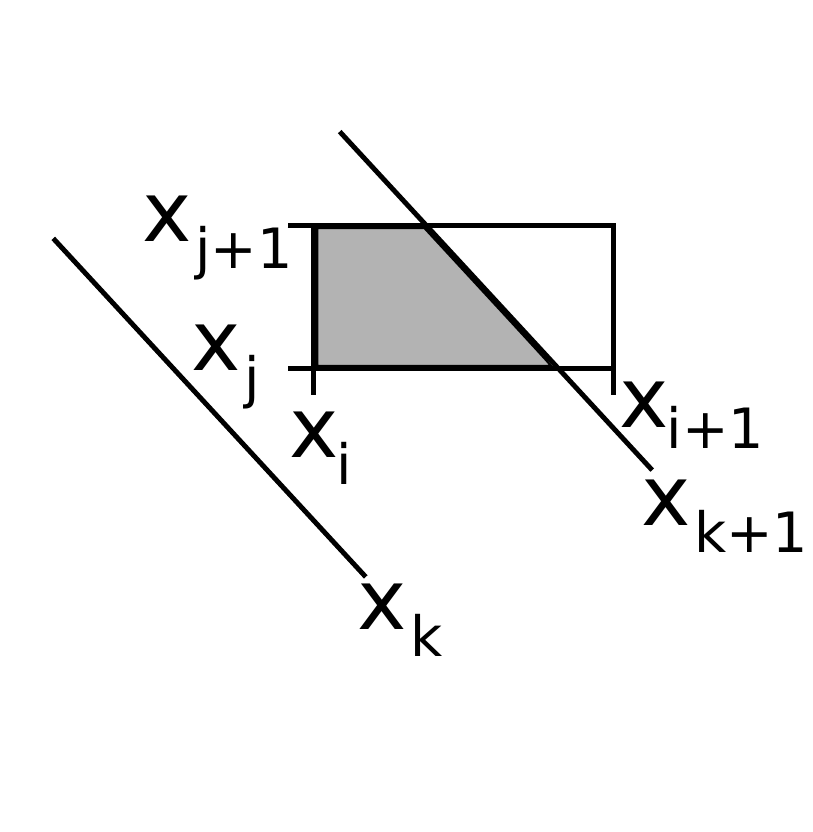}{2x_{k} \leq x_{i} + x_{j} \leq x_{i} + x_{j+1} \leq 2x_{k+1} \leq x_{i+1} + x_{j}}{(x_{j+1} - x_{j}) (2x_{k+1} - x_{j}/2 - x_{j+1}/2 - x_{i})}
\polyRegBox{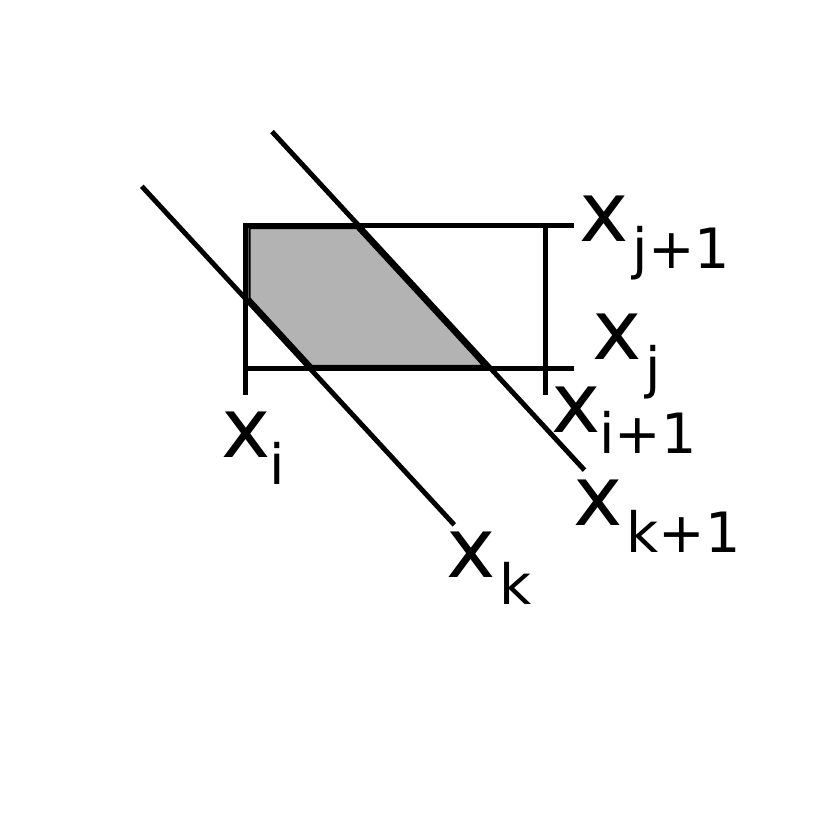}{x_{i} + x_{j} \leq 2x_{k} \leq x_{i} + x_{j+1} \leq 2x_{k+1} \leq x_{i+1} + x_{j}}{(x_{j+1} - x_{j})(2x_{k+1} - x_{j}/2 - x_{j+1}/2 - x_{i}) \\&\quad- (2x_{k} - x_{i} - x_{j})^2/2}
\polyRegBox{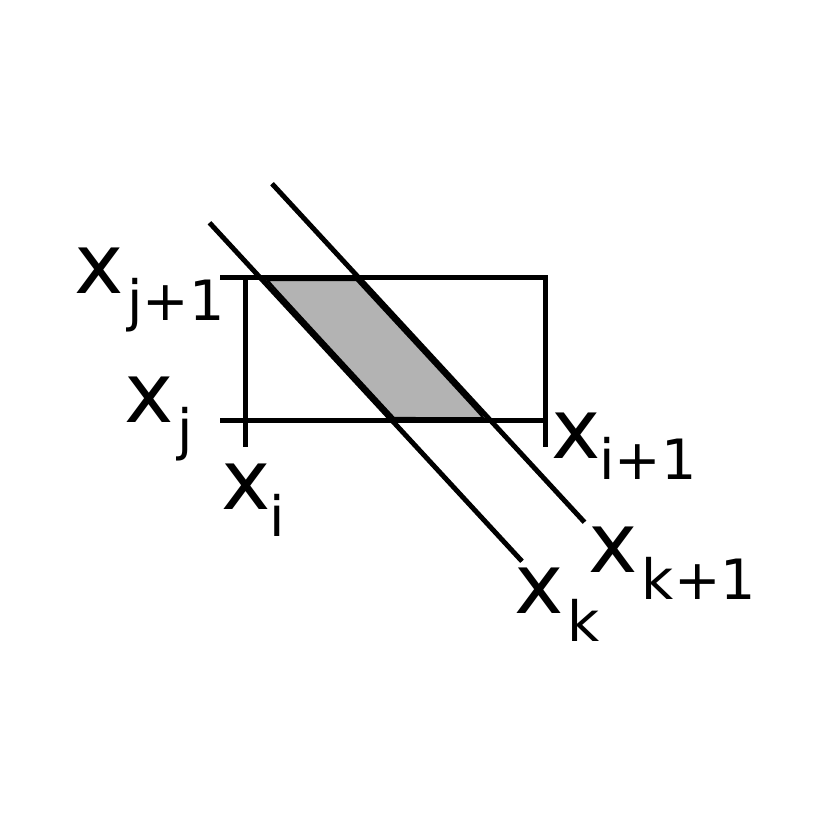}{x_{i} + x_{j+1} \leq 2x_{k} \leq 2x_{k+1} \leq x_{i+1} + x_{j}}{(x_{j+1} - x_{j})(2x_{k+1} - 2x_{k})}
\polyRegBox{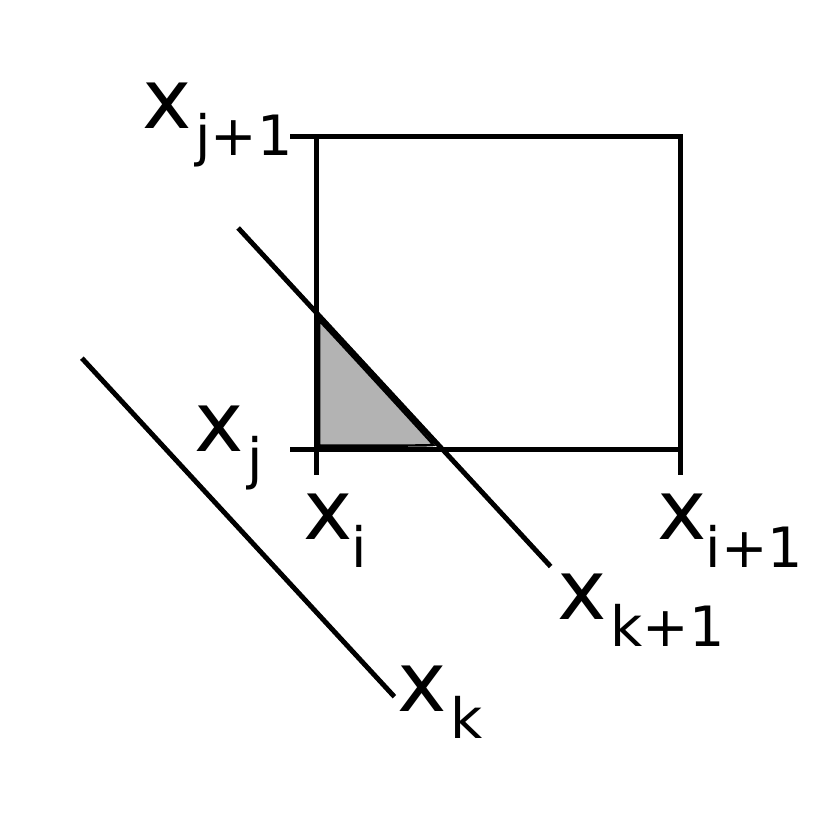}{2x_{k} \leq x_{i} + x_{j} \leq 2x_{k+1} \leq \min(x_{i} + x_{j+1}, x_{i+1} + x_{j})}{(2x_{k+1} - x_{i} - x_{j})^2/2}
\polyRegBox{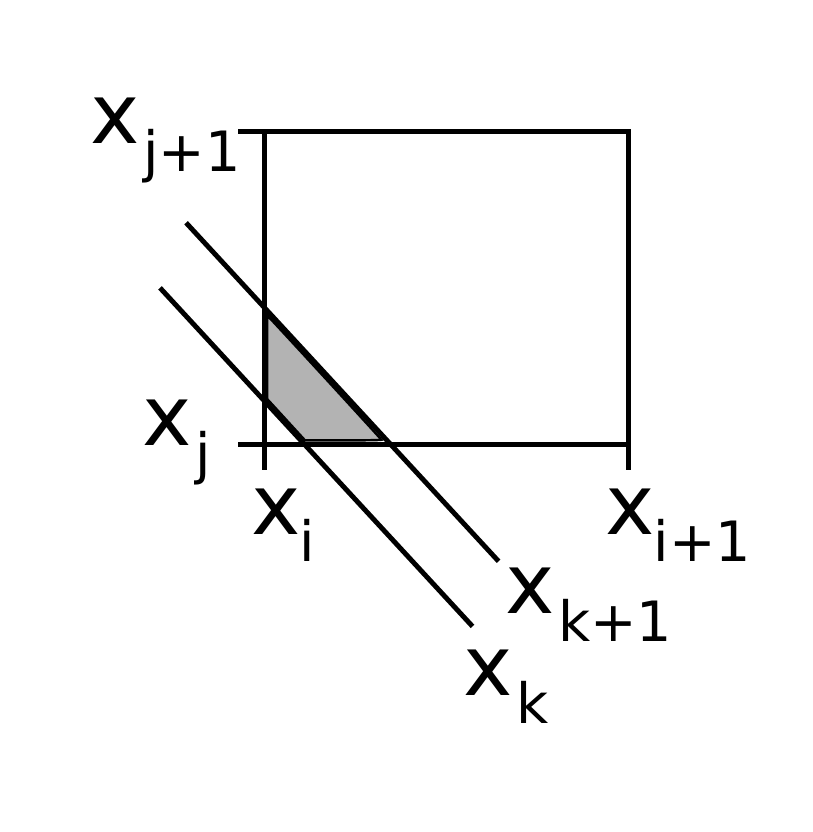}{x_{i} + x_{j} \leq 2x_{k} \leq 2x_{k+1} \leq \min(x_{i} + x_{j+1}, x_{i+1} + x_{j})}{(2x_{k+1} - x_{i} - x_{j})^2/2 - (2x_{k} - x_{i} - x_{j})^2/2}
\polyRegBox{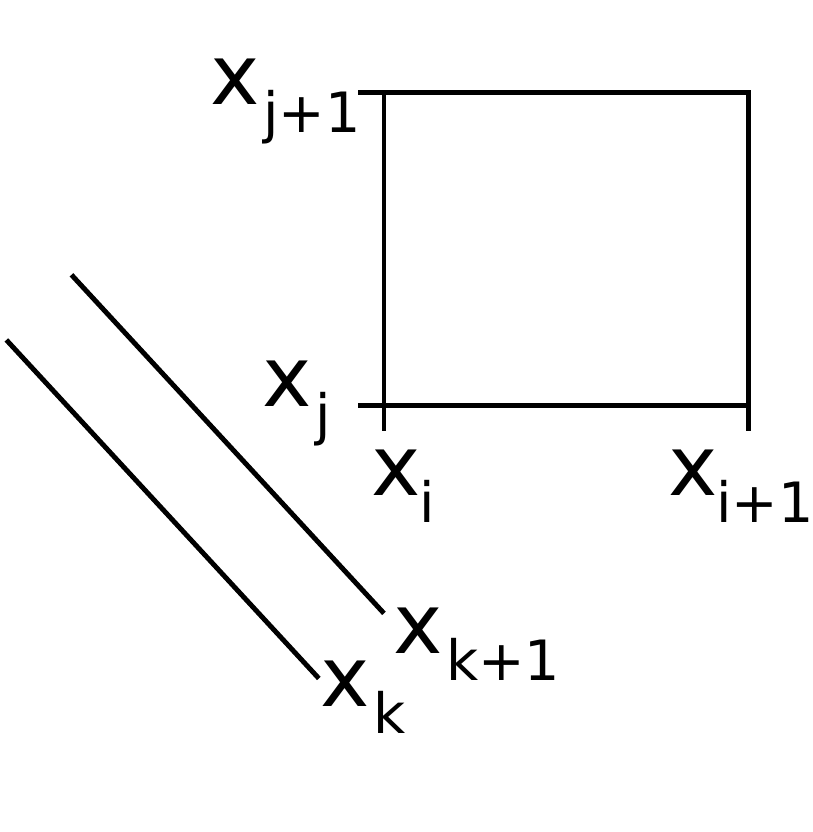}{2x_{k+1} \leq x_{i} + x_{j}}{0}
\polyRegBox{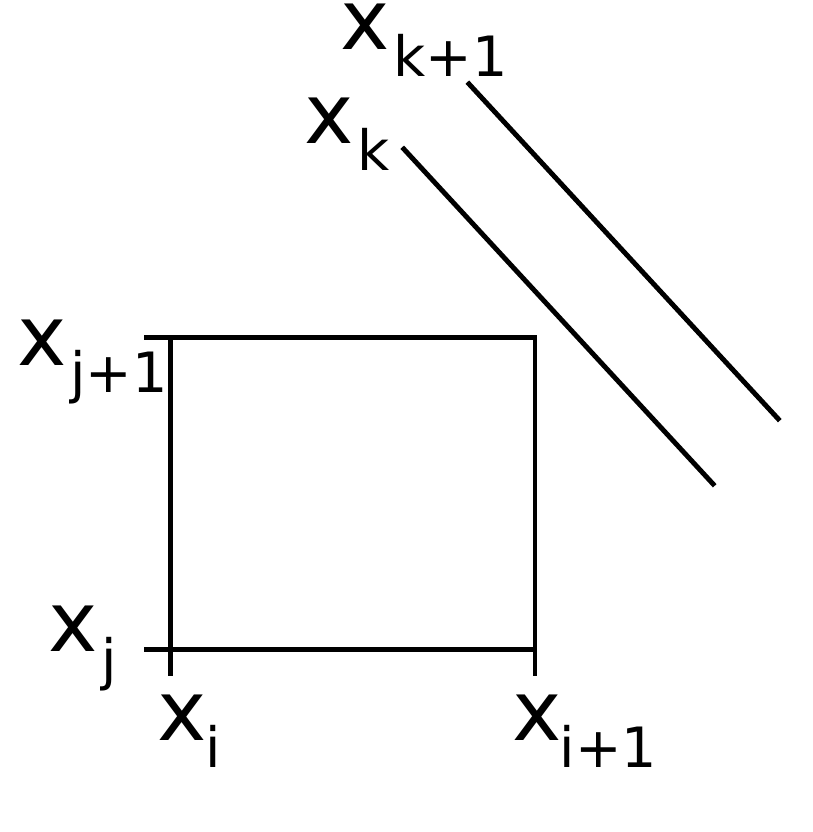}{x_{i+1} + x_{j+1} \leq 2x_{k}}{0}
}



\end{document}